\title{Linear sets in the projective line over the endomorphism ring of a finite field\thanks{This
work was supported by GNSAGA of \emph{Istituto Nazionale di Alta Matematica ``F.~Severi''} (Rome) and partly done while the first
author was Visiting Professor at the University of Padua, Vicenza, Italy.}}
\author{Hans Havlicek \and Corrado Zanella}
\newcommand{\cR}{{\mathcal R}}
\newcommand{\cB}{{\mathcal B}}
\newcommand{\cG}{{\mathcal G}}
\newcommand{\cF}{{\mathcal F}}
\newcommand{\cP}{{\mathcal P}}
\newcommand{\cH}{{\mathcal H}}
\newcommand{\cD}{{\mathcal D}}
\newcommand{\cU}{{\mathcal U}}
\newcommand{\cQ}{{\mathcal Q}}
\newcommand{\cS}{{\mathcal S}}
\newcommand{\F}{{\mathbb F}}
\newcommand{\Fq}{\F_q}
\newcommand{\Fqt}{\F_{q^t}}
\newcommand{\la}{\langle}
\newcommand{\ra}{\rangle}
\newtheorem{theorem}{Theorem}[section]
\newtheorem*{theorem*}{Theorem}
\newtheorem{lemma}[theorem]{Lemma}
\newtheorem{proposition}[theorem]{Proposition}
\theoremstyle{definition}%% Hans
\newtheorem{definition}[theorem]{Definition}
\newtheorem{example}[theorem]{Example}
\newtheorem{remark}[theorem]{Remark}
\DeclareMathOperator{\PG}{{PG}}
\DeclareMathOperator{\GL}{{GL}}
\DeclareMathOperator{\PGL}{{PGL}}
\DeclareMathOperator{\PGaL}{P\Gamma L}
\DeclareMathOperator{\Gal}{Gal}
\DeclareMathOperator{\SL}{{SL}}
\DeclareMathOperator{\PSL}{{PSL}}
\DeclareMathOperator{\End}{End}
\DeclareMathOperator{\diag}{diag}
\newcommand{\nodist}{\mathrel{\not\kern-.33em{\bigtriangleup}}}
\newcommand{\one}{\mathbbm 1}
\newcommand{\gal}{\Gal(\Fqt/\Fq)}
\DeclareMathOperator{\Spec}{Spec}
\newcommand{\dist}{\mathrel{\bigtriangleup}}
\DeclareMathOperator{\Char}{Char}
\newcommand{\Fqtt}{\F_{q^{2t}}}
\begin{document}

\maketitle

\begin{abstract}
  Let $\PG(1,E)$ be the projective line over the endomorphism ring $
  E=\End_q(\Fqt)$ of the $\Fq$-vector space $\Fqt$. As is well known there is a
  bijection $\Psi:\PG(1,E)\rightarrow\cG_{2t,t,q}$ with the Grassmannian of the
  $(t-1)$-subspaces in $\PG(2t-1,q)$. In this paper along with any $\Fq$-linear
  set $L$ of rank $t$ in $\PG(1,q^t)$, determined by a $(t-1)$-dimensional
  subspace $T^\Psi$ of $\PG(2t-1,q)$, a subset $L_T$ of $\PG(1,E)$ is investigated. Some properties of
  linear sets are expressed in terms of the projective line over the
  ring $E$. In particular the attention is focused on the relationship between
  $L_T$ and the set $L'_T$, corresponding via $\Psi$ to a collection of
  pairwise skew $(t-1)$-dimensional subspaces, with $T\in L'_T$, each of which
  determine $L$. This leads among other things to a characterization of the
  linear sets of pseudoregulus type. It is proved that a scattered linear set $L$ related to
  $T\in\PG(1,E)$ is of pseudoregulus type
  if and only if there exists a projectivity $\varphi$ of $\PG(1,E)$ such that $L_T^\varphi=L'_T$.\\
  \emph{Mathematics subject classification (2010):} 51E20, 51C05, 51A45, 51B05.
\\
  \emph{Keywords:} scattered linear set; linear set of pseudoregulus type; 
  projective line over a finite field; projective line over a ring.
\end{abstract}

 \section{Introduction}\label{s:intro}

\subsection{Motivation}

In this paper linear sets of rank $t$ in the projective line $\PG(1,q^t)$ are
investigated, where $q$ is a power of a prime $p$. Such linear sets can be
described by means of the \emph{field reduction map} $\cF=\cF_{2,t,q}$
\cite{LaVa2015} mapping any point $\la (a,b)\ra_{q^t}\in
\PG_{q^t}(\Fqt^2)\cong\PG(1,q^t)$ to the $(t-1)$-subspace\footnote{Abbreviation
for $(t-1)$-dimensional subspace.} of $\PG_q(\Fqt^2)\cong\PG(2t-1,q)$
associated with $\la (a,b)\ra_{q^t}$ (considered here as a $t$-dimensional
$\Fq$-vector subspace). A point set $L \subseteq \PG(1,q^t)$ is said to be
\emph{$\F_q$-linear} (or just \emph{linear}) \emph{of rank $n$} if $L=\cB(T')$,
where $T'$ is an $(n-1)$-subspace of $\PG_q(\Fqt^2)$, and
\begin{equation}\label{e:B(T)}
      \cB(T')=\left\{\la (u,v) \ra_{q^t}\mid \la (u,v)\ra_q\in T'\right\}
        =\left\{P\in\PG(1,q^t)\mid P^\cF\cap T'\neq\emptyset\right\}.
\end{equation}
Additionally, each such $T'$ gives rise to the set $\cU(T')=\cB(T')^\cF=L^\cF$,
which is a collection of $(t-1)$-subspaces belonging to the \emph{standard
Desarguesian spread} $\cD=\PG(1,q^t)^\cF$ of $\PG_q(\Fqt^2)$.

If a linear set $L$ of rank $n$ in $\PG(1,q^t)$ has size
$\theta_{n-1}=(q^n-1)/(q-1)$ (which is the maximum size for a linear set of
rank $n$), then $L$ is a \emph{scattered} linear set. For generalities on the
linear sets the reader is referred to \cite{LaVdV2015}, \cite{LaVa2015},
\cite{LaZa2015}, \cite{LaZa201X}, and \cite{Po2010}.

As it has been pointed out in \cite[Prop.~2]{LaShZa2015}, if $L=\cB(T')$ is a
scattered linear set of rank $t$ in $\PG(1,q^t)$, then the union of all
subspaces in $\cU(T')=L^\cF$ is a hypersurface $\cQ$ of degree $t$ in
$\PG(2t-1,q)$, and an embedded product space isomorphic to
$\PG(t-1,q)\times\PG(t-1,q)$. So, $\cQ$ has two partitions in
$(t-1)$-subspaces. The first one is $\cU(T')$, the second one is
$\cU'(T')=\{T'h\mid h\in\Fqt^*\}$, where $T'h=\{\la (hu,hv)\ra_q\mid\la
(u,v)\ra_q\in T'\}$. By Prop.~\ref{p:caratt-massimali}, the family $\cU'(T')$
can be recovered uniquely from $\cU(T')$ and $T'$ (disregarding that $\Fqt^2$
is the underlying vector space of our $\PG(2t-1,q)$).

For $t=n$ there is alternative approach to $\cB(T')$ and $\cU(T')$ irrespective
of whether $T'$ is scattered or not. It is based on the $\Fq$-{endomorphism}
ring $E$ of $\Fqt$ and the projective line $\PG(1,E)$ over this ring. On the
one hand, there is a bijection $\Psi$ between the projective line $\PG(1,E)$
and the Grassmannian $\cG_{2t,t,q}$ of $(t-1)$-subspaces of $\PG_q(\Fqt^2)$.
So, instead of $T'$ we may consider its image under $\Psi^{-1}$, which is
\emph{a point $T$ of\/ $\PG(1,E)$}. On the other hand, we have a natural
embedding $\iota:\PG(1,q^t)\rightarrow\PG(1,E)$. It maps the linear set
$\cB(T')$ to \emph{a subset $\cB(T')^\iota=:L_T$ of\/ $\PG(1,E)$}, which in
turn is the preimage under $\Psi$ of $\cU(T')$. In Section \ref{s:basic}, we
take up these ideas, but we start with an equivalent definition, which is in
terms of $\PG(1,E)$ only, of the set $L_T$. There we also define a second set
$L'_T\subset\PG(1,E)$ in such a way that $(L'_T)^\Psi$ equals the set
$\cU'(T^\Psi)$ from above in the scattered case. Furthermore, since $T$ will
play a predominant role, $\cB(T')=\cB(T^\Psi)$ will frequently also be denoted
by $\cB(T)$; \emph{mutatis mutandis} this applies also to $\cU(T')$ and
$\cU'(T')$.

A special example of a scattered linear set $L=\cB(T)$ in $\PG(1,q^t)$ is a
\emph{linear set of pseudoregulus type}, defined in \cite{DoDu2014},
\cite{LuMaPoTr2014}, and further investigated in \cite{CsZa2016}. In our
setting it is obtained by taking $T=E(\one,\tau)$, where $\tau$ is a generator
of the Galois group $\gal$. The related hypersurface $\cQ$ in $\PG(2t-1,q)$ has
been studied in \cite{LaShZa2015}, revealing a high degree of symmetry. As a
matter of fact there are $t$ families $\cS_0$, $\cS_1$, $\ldots$, $\cS_{t-1}$
of $(t-1)$-subspaces partitioning $\cQ$ \cite[Thm.~6]{LaShZa2015} where
$\cS_0=\cU(T)=L_T^\Psi$ and $\cS_1=\cU'(T)=(L'_T)^\Psi$ are defined above.
Furthermore, in \cite[Cor.~18]{LaShZa2015} it is proved that the stabilizer of
$\cQ$ inside $\PGaL_{2t}(q)$ contains a dihedral subgroup of order $2t$ acting
on such $t$ families of $(t-1)$-subspaces. A consequence thereof is the
following result, for which we give a short direct proof in
Prop.~\ref{p:zero-converse}: \emph{There is a projectivity of\/ $\PG(1,E)$
mapping $L_T$ onto $L'_T$.} For $t\ge3$ this turns out to be a characteristic
property of the linear sets of pseudoregulus type (Thm.~\ref{t:pseudoreg}). As
the projectivities of $\PG(1,E)$ and the projectivities of $\PG_q(\Fqt^2)\cong
\PG(2t-1,q)$ are in one-to-one correspondence, this leads to the following:
\begin{theorem*}%\label{t:main}
  Let $L=\cB(T')$ be a scattered linear set of rank $t$ in $\PG(1,q^t)$, with
  $T'$ a $(t-1)$-dimensional subspace of $\PG(2t-1,q)$, and $t\ge3$. Then $L$
  is a linear set of pseudoregulus type if, and only if, a projectivity of\/
  $\PG(2t-1,q)$ exists mapping the first family $\cU(T')$ of subspaces of the
  related embedded product space to the second one $\cU'( T')$.
\end{theorem*}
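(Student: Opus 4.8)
The plan is to derive this statement directly from Theorem~\ref{t:pseudoreg} by transporting everything through the bijection $\Psi\colon\PG(1,E)\to\cG_{2t,t,q}$. Given the $(t-1)$-subspace $T'$, set $T:=(T')^{\Psi^{-1}}$, so that $T$ is a point of $\PG(1,E)$ and $T'=T^\Psi$. Since $L=\cB(T')=\cB(T)$ is scattered, the two subsets of $\PG(1,E)$ introduced in Section~\ref{s:basic} satisfy $L_T^\Psi=\cU(T')$ and $(L'_T)^\Psi=\cU'(T')$, i.e.\ they correspond under $\Psi$ to the two families of $(t-1)$-subspaces of the embedded product space. Theorem~\ref{t:pseudoreg} asserts that $L$ is of pseudoregulus type if, and only if, there is a projectivity $\varphi$ of $\PG(1,E)$ with $L_T^\varphi=L'_T$; the task is therefore to show that the existence of such a $\varphi$ is equivalent to the existence of a projectivity of $\PG(2t-1,q)$ mapping $\cU(T')$ onto $\cU'(T')$.

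The key step is the equivariance of $\Psi$ with respect to the two projectivity groups. First I would make precise the one-to-one correspondence already recorded in the Introduction: to every projectivity $\varphi$ of $\PG(1,E)$ there corresponds a unique projectivity $g$ of $\PG(2t-1,q)$ such that $(X^\varphi)^\Psi=(X^\Psi)^g$ for every point $X$ of $\PG(1,E)$, and the assignment $\varphi\mapsto g$ is a bijection between $\PGL_2(E)$ and $\PGL_{2t}(q)$. Granting this intertwining relation, a projectivity $\varphi$ satisfies $L_T^\varphi=L'_T$ precisely when, applying $\Psi$, one has $(L_T^\Psi)^g=(L'_T)^\Psi$, that is $\cU(T')^g=\cU'(T')$; here the forward implication uses only that $g$ exists and intertwines, while the converse additionally uses that $\varphi\mapsto g$ is onto and that $\Psi$ is injective on points, so that $(L_T^\varphi)^\Psi=(L'_T)^\Psi$ forces $L_T^\varphi=L'_T$. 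Running this in both directions and invoking Theorem~\ref{t:pseudoreg} then closes the equivalence.

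I expect the main obstacle to be the careful verification that the correspondence $\varphi\mapsto g$ is genuinely \emph{action-compatible}, i.e.\ that it conjugates the $\PGL_2(E)$-action on the Grassmannian (induced through $\Psi$) into the ordinary $\PGL_{2t}(q)$-action, and is not merely an abstract group isomorphism. Concretely, I would identify $E=\End_q(\Fqt)$ with the ring $M_t(\Fq)$ of $t\times t$ matrices over $\Fq$, so that a $2\times2$ matrix over $E$ representing $\varphi$, acting on $E^2\cong\Fqt^2$, is literally a $2t\times2t$ matrix over $\Fq$ inducing the collineation $g$; since $\GL_2(E)\cong\GL_{2t}(q)$ acts on the one underlying $\F_q$-space $\Fqt^2$, the map $\Psi$ commutes with both actions by construction. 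Once this compatibility is secured, everything else is a formal transport of Theorem~\ref{t:pseudoreg}, and the hypothesis $t\ge3$ is inherited from that theorem without further work.
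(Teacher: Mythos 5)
Your proposal is correct and follows exactly the route the paper intends: the introduction derives this theorem from Proposition~\ref{p:zero-converse} and Theorem~\ref{t:pseudoreg} via the one-to-one, $\Psi$-equivariant correspondence $\varphi\mapsto\hat\varphi$ between $\PGL_2(E)$ and the projectivities of $\PG_q(\Fqt^2)$ set up in \eqref{e:hat-phi}, which is precisely the transport argument you spell out (including the identification $E\cong M_t(\Fq)$, $\GL_2(E)\cong\GL_{2t}(q)$). The only nitpick is attribution: the ``only if'' direction of the equivalence in $\PG(1,E)$ is Proposition~\ref{p:zero-converse}, not Theorem~\ref{t:pseudoreg}, which supplies only the converse.
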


\subsection{Notation}\label{ss:notation}

Let $E=\End_q(\Fqt)$ with $t\ge 2$ be the ring of $\Fq$-linear endomorphisms of
$\Fqt$. The ring $E$ has the identity $\one\in E$ as its unit element. The
multiplicative group comprising all invertible elements of $E$ will be denoted
as $E^*$.

Let us briefly recall the definition of the \emph{projective line over the ring
$E$}, which will be denoted by $\PG(1,E)$, and several basic notions; see
\cite[1.3]{BlHe2005}, \cite[3.2]{Ha2012}, and \cite[1.3]{He1995}. We start with
$E^2$, which is regarded as a \emph{left} module over $E$ in the usual way.
Elements of $E^2$ are written as rows. This module has the standard basis
$\left( (\one,0), (0,\one)\right)$, and so it is a free module of rank $2$. All
invertible $2\times 2$ matrices with entries in $E$ constitute the general
linear group $\GL_2(E)$, which acts in a natural way on the elements of $E^2$
from the \emph{right hand side}. Now $\PG(1,E)$, whose elements will be called
\emph{points}, is defined as the orbit of the cyclic submodule $E(\one,0)$ (the
``starter point'') under the action of the group $\GL_2(E)$ on $E^2$.
Therefore, any point of $\PG(1,E)$ can be written in the form
$E(\alpha,\beta)$, where the pair $(\alpha,\beta)\in E^2$ is \emph{admissible},
i.e., it is the first row of a matrix from $\GL_2(E)$. Furthermore, if
$(\alpha',\beta')$ is any element of $E^2$ then
$E(\alpha',\beta')=E(\alpha,\beta)$ holds precisely when there is an element
$\gamma\in E^*$ such that $(\alpha',\beta')=(\gamma\alpha,\gamma\beta)$. In
this case $(\alpha',\beta')$ is admissible too.

The projective line $\PG(1,E)$ is endowed with a binary \emph{distant relation}
$\dist$ as follows: The relation $\dist$ is the orbit of the pair $\left(
(\one,0), (0,\one)\right)$ under the (componentwise) action of $\GL_2(E)$. Thus
$E(\alpha,\beta)\dist E(\gamma,\delta)$ holds if, and only if, ${{\alpha\;
\beta}\choose {\gamma\; \delta}}\in\GL_2(E)$.

The map
\begin{equation}\label{e:Psi}
    \Psi: \PG(1,E) \to \cG_{2t,t,q} :
     E(\alpha,\beta)\mapsto
    \left\{\la(u^\alpha,u^\beta)\ra_q\mid u\in\Fqt^*\right\}
\end{equation}
is a bijection of $\PG(1,E)$ onto the Grassmannian $\cG_{2t,t,q}$ of
$(t-1)$-subspaces of $\PG_q(\Fqt^2)\cong\PG(2t-1,q)$. Any two points of
$\PG(1,E)$ are distant if, and only if, their images under $\Psi$ are disjoint
(or, said differently, complementary) \cite[Thm.~2.4]{Bl1999}. For versions of
the previous results in terms of matrix rings we refer to
\cite[10.2]{BlHe2005}, \cite[5.2.3]{Ha2012}, and \cite[4.5]{He1995}. See also
\cite[123ff.]{Wan1996}, even though the terminology used there is quite
different from ours.

Let $\varphi$ denote a projectivity of $\PG(1,E)$, i.e., $\varphi$ is given by
a matrix
\begin{equation}\label{e:tilde}
    \begin{pmatrix}
    \alpha & \beta\\ \gamma & \delta
    \end{pmatrix}\in\GL_2(E)
\end{equation}
acting on $E^2$. Then the mapping
\begin{equation}\label{e:hat-phi}
\hat\varphi : \PG_q(\Fqt^2) \to \PG_q(\Fqt^2) :
    \la(u,v)\ra_q\mapsto \la(u^\alpha+v^\gamma, u^\beta + v^\delta)\ra_q
\end{equation}
is a projective collineation. The action of $\hat\varphi$ on the Grassmannian
$\cG_{2t,t,q}$ is given by $\Psi^{-1}\varphi\Psi$. By \cite[642--643]{Lang93},
every projective collineation of $\PG_q(\Fqt^2)$ can be written as in
\eqref{e:hat-phi} for some matrix from $\GL_2(E)$.

Under any projectivity of $\PG(1,E)$ the distant relation $\dist$ is preserved.
The obvious counterpart of this observation is the fact that under any
projective collineation of $\PG_q(\Fqt^2)$ the complementarity of subspaces
from $\cG_{2t,t,q}$ is preserved.

If $a\in\Fqt$ then $\rho_a\in E$ is defined by $x^{\rho_a}=ax$ for all
$x\in\Fqt$. The mapping
\begin{equation*}
    \Fqt\to  E:a\mapsto \rho_a
\end{equation*}
is a monomorphism of rings taking $1\in\Fqt$ to the identity $\one\in E$. The
image of this monomorphism will be denoted by $F$. We now consider $\PG(1,F)$
as a subset of $\PG(1,E)$ by identifying $F(\rho_a,\rho_b)$ with
$E(\rho_a,\rho_b)$ for all $(a,b)\in \Fqt^2\setminus\{(0,0)\}$. This allows us
to embed the projective line $\PG(1,q^t)$ in the projective line $\PG(1,E)$ as
follows:
\begin{equation}\label{e:embed}
    \iota:\PG(1,q^t)\to\PG(1,  E ): \la(a,b)\ra_{q^t}\mapsto  E (\rho_a,\rho_b) .
\end{equation}
Following \cite{BlHa2000a}, the image of $\PG(1,F)$ under any projectivity of
$\PG(1,E)$ is called an \emph{$F$-chain}\footnote{Our $F$-chains are different
from the chains in \cite{BlHe2005} and \cite{He1995}, since $F$ is not
contained in the centre of $E$.} of $\PG(1,E)$. In particular,
$\PG(1,q^t)^\iota$ is an $F $-chain of $\PG(1,E)$.

Any two distinct points of $\PG(1,E)$ are distant precisely when they belong to
a common $F$-chain \cite[Lemma 2.1]{BlHa2000a}. From this we obtain the
following result \cite[Thm.~3.4.7]{Ha2012}, which is a slightly modified
version of \cite[Thm.~2.3]{BlHa2000a}:
%% Ende neue Version

\begin{proposition}\label{p:0}Given three distinct
points $P_1,Q_1,R_1$ on an $F$-chain $C_1$ and three distinct points $P_2,Q_2,R_2$ on an
$F$-chain $C_2$ there is at least one projectivity $\pi$ of $\PG(1,E)$ with
$P_1^\pi= P_2$, $Q_1^\pi=Q_2$, $R_1^\pi=R_2$ and $C_1^\pi=C_2$.
\end{proposition}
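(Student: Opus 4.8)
The plan is to reduce the general situation to the case of a single fixed $F$-chain, namely $\PG(1,F)$, where the statement follows from the sharp $3$-transitivity of the classical projective line over the field $\Fqt\cong F$.

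First I would exploit the very definition of an $F$-chain. Since $C_1$ and $C_2$ are images of $\PG(1,F)$ under projectivities, choose projectivities $\sigma_1,\sigma_2$ of $\PG(1,E)$ with $\PG(1,F)^{\sigma_1}=C_1$ and $\PG(1,F)^{\sigma_2}=C_2$. Transport the given points back to the standard chain by setting $\overline{P}_i=P_i^{\sigma_i^{-1}}$ and likewise $\overline{Q}_i,\overline{R}_i$; these are two triples of distinct points of $\PG(1,F)$. If one produces a projectivity $\pi'$ with $\PG(1,F)^{\pi'}=\PG(1,F)$ and $\overline{P}_1^{\,\pi'}=\overline{P}_2$, $\overline{Q}_1^{\,\pi'}=\overline{Q}_2$, $\overline{R}_1^{\,\pi'}=\overline{R}_2$, then $\pi:=\sigma_1^{-1}\pi'\sigma_2$ does the job: a direct computation gives $C_1^\pi=\PG(1,F)^{\pi'\sigma_2}=C_2$ and $P_1^\pi=\overline{P}_2^{\,\sigma_2}=P_2$, and similarly for the other two points. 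Thus everything comes down to showing that the stabilizer of $\PG(1,F)$ in the projectivity group acts $3$-transitively on $\PG(1,F)$.

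For this I would use that $F$ is a commutative subfield of $E$ isomorphic to $\Fqt$. Consequently any $2\times2$ matrix over $F$ with nonzero determinant is already invertible inside $M_2(F)$ (by the adjugate formula, since $F$ is a field), and that inverse is a two-sided inverse in $M_2(E)$ as well, so $\GL_2(F)\subseteq\GL_2(E)$. A matrix over $F$ induces a projectivity of $\PG(1,E)$ which, because $a\mapsto\rho_a$ is a ring homomorphism, sends each point $E(\rho_a,\rho_b)$ of $\PG(1,F)$ to a point $E(\rho_{a'},\rho_{b'})$ of the same form; hence these projectivities stabilize $\PG(1,F)$ setwise and act on it, via the identification $\iota$ of \eqref{e:embed}, exactly as $\PGL_2(q^t)$ acts on $\PG(1,q^t)$. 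Since $\PGL_2(q^t)$ is sharply $3$-transitive on the points of $\PG(1,q^t)$, the required $\pi'$ exists (indeed it is unique as an element of $\PGL_2(F)$), which completes the reduction and hence the proof.

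The routine but essential point to secure — and the place where the noncommutativity of $E$ could be a pitfall — is the second step: one must verify carefully that $\GL_2(F)$ really embeds into $\GL_2(E)$ and that the resulting projectivities fix $\PG(1,F)$ with the classical action. This works precisely because $F$ itself is commutative, even though $F$ is \emph{not} central in $E$; along the way one should confirm the well-definedness of $\iota$ and the admissibility of the pairs $(\rho_a,\rho_b)$. Once this compatibility is established, the $3$-transitivity is inherited directly from the field case and the conjugation argument of the first step finishes everything.
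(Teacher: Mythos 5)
Your proof is correct. The paper does not prove Proposition~\ref{p:0} itself but imports it from \cite[Thm.~2.3]{BlHa2000a} and \cite[Thm.~3.4.7]{Ha2012}, and your argument is exactly the standard one behind those references: conjugate both chains back to $\PG(1,F)$ using the defining property of an $F$-chain, then observe that $(m_{ij})\mapsto(\rho_{m_{ij}})$ embeds $\GL_2(q^t)$ into $\GL_2(E)$ (the adjugate inverse works because $F$ is commutative, even though it is not central) and that the induced projectivities stabilize $\PG(1,F)$ and act on it, via $\iota$, as the sharply $3$-transitive group $\PGL_2(q^t)$. The points you flag as needing care --- admissibility of $(\rho_a,\rho_b)$ and well-definedness of $\iota$ --- are indeed the only routine verifications left, and both go through.
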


\section{Scattered points}\label{s:basic}

\begin{definition}\label{d:L(T)}
For any point $T=E(\alpha,\beta)\in\PG(1,E)$ define:
  \begin{eqnarray*}
    L_T&=&\left\{ E(\rho_a,\rho_b)\mid (a,b)\in(\Fqt^2)^*\mbox{ {s.t. }} E(\rho_a,\rho_b)\nodist T\right\};\\
    %%Th&=&T\diag(\rho_h,\rho_h)\qquad(h\in\Fqt^*);\\
    L'_T&=&\left\{T\cdot\diag(\rho_h,\rho_h) \mid h\in\Fqt^*\right\}.
  \end{eqnarray*}
\end{definition}
Also, we introduce the shorthand $Th:= T\cdot\diag(\rho_h,\rho_h)$, where $h$
is as above. By the proof of Prop.~\ref{p:hans3} below, the point set $L'_T$ is
the orbit of $T$ under the group of all projectivities of $\PG(1,E)$ that fix
$\PG(1,F)$ pointwise.

The following diagram describes the relationships involving some objects
defined so far. (Note that the right hand side of \eqref{e:B(T)} gives
$\cB(T)^\cF=L_T^\Psi$.)

\begin{center}
\includegraphics[scale=0.25]{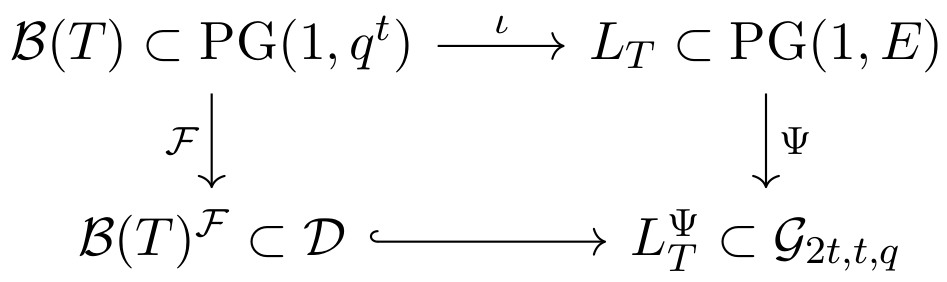}
\end{center}

\begin{definition}
  A \emph{scattered point} of $\PG(1,E)$ is a point $T$ such that
  $\#L_T=\theta_{t-1}$.
\end{definition}

A point $T\in\PG(1,E)$ is scattered if, and only if, $\cB(T)$ is a scattered
linear set. For a point $X=E(\rho_a,\rho_b)$ is distant from $T$ if, and only
if, the $(t-1)$-subspace $X^\Psi$ defined by the vector subspace
$\la(a,b)\ra_{q^t}$ is disjoint from $T^\Psi$.

\begin{example}\label{T_0}
If $\tau$ is a generator of $\gal$ and $T_0=E(\one,\tau)$, then $\cB(T_0)$ is a
scattered linear set of pseudoregulus type \cite{LuMaPoTr2014}. Hence $T_0$ is
a scattered point of $\PG(1,E)$.
\end{example}

\begin{definition}
  For $T\in\PG(1,E)$, the set $L_T$ will be said \emph{of pseudoregulus type}
  when $L_T^{\iota^{-1}}$ is a linear set of pseudoregulus type.
\end{definition}
Any two linear sets of pseudoregulus type are projectively equivalent
\cite{DoDu2014}, \cite{LuMaPoTr2014}. So $L_T$ is of pseudoregulus type if and
only if $L_T=L_{ E(\one,\tau)}^\pi$ where $\tau$ is a generator of $\gal$ and
$\pi$ is a projectivity of $\PG(1,F)$.
\begin{example}\label{T_1}
 The point $T_1= E(\one,\sigma\rho_g+\sigma^{t-1})$ with $\sigma:u\mapsto u^q$,
   $g\in\Fqt^*$, and $g^{\theta_{t-1}}\neq1$ is scattered
   \cite[Thm.~2]{LuPo2001}.
\end{example}

\begin{proposition}\label{p:Th}
  Let $T\in\PG(1,E)\setminus\PG(1,F)$. Then $Th=T k $ for any $h, k \in\Fqt^*$
  such that $h^{-1} k \in\Fq$. Furthermore, if $T$ is scattered, then
  $Th\bigtriangleup T k $ for any $h, k \in\Fqt^*$ such that
  $h^{-1} k \not\in\Fq$.
\end{proposition}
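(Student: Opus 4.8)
The plan is to transport everything to the $t$-dimensional $\Fq$-subspace
$W=\{(u^\alpha,u^\beta)\mid u\in\Fqt\}$ of $\Fqt^2$ whose projectivisation is $T^\Psi$, where $T=E(\alpha,\beta)$. First I record two bookkeeping facts. By the right action of $\GL_2(E)$ on $E^2$ one has $Th=E(\alpha\rho_h,\beta\rho_h)$, and then by \eqref{e:Psi}, together with $u^{\alpha\rho_h}=(u^\alpha)^{\rho_h}=h\,u^\alpha$, the subspace $(Th)^\Psi$ is the projectivisation of $hW:=\{(h u^\alpha,h u^\beta)\mid u\in\Fqt\}$. Since $\Psi$ is a bijection and two points of $\PG(1,E)$ are distant exactly when their $\Psi$-images are complementary, the equality $Th=Tk$ is equivalent to $hW=kW$, and $Th\dist Tk$ is equivalent to $hW\cap kW=\{0\}$.

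For the first assertion, write $k=ch$ with $c=h^{-1}k\in\Fq^*$. The element $\rho_c$ is central in $E$ (it is multiplication by the scalar $c\in\Fq$, which commutes with every $\Fq$-linear endomorphism) and invertible, and $\rho_k=\rho_c\rho_h$; hence $(\alpha\rho_k,\beta\rho_k)=\rho_c(\alpha\rho_h,\beta\rho_h)$, so $Tk=Th$ by the proportionality rule for homogeneous coordinates, with $\gamma=\rho_c\in E^*$. (Geometrically this is the same as $hW=kW$, which holds because $W$ is an $\Fq$-subspace and $hk^{-1}=c^{-1}\in\Fq^*$.) Note that this part needs no hypothesis on $T$.

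For the second assertion I would assume $T$ scattered and $h^{-1}k\notin\Fq$. Applying the $\Fq$-linear bijection $k^{-1}\cdot$ of $\Fqt^2$ sends $hW\cap kW$ onto $wW\cap W$ with $w=hk^{-1}\notin\Fq$, so it suffices to prove $wW\cap W=\{0\}$. Suppose for a contradiction that $x\in wW\cap W$ is nonzero and write $x=wy$ with $y\in W\setminus\{0\}$. Then both $y$ and $x=wy$ lie in $W$ and in the Desarguesian spread element $S=\Fqt\,y\in\cD$ passing through $y$; moreover $y$ and $wy$ are $\Fq$-linearly independent, since $wy\in\Fq y$ would force $w\in\Fq$. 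Hence $W\cap S$ has $\Fq$-dimension at least $2$, i.e. $T^\Psi$ meets the spread element $S$ in more than one point. This contradicts scatteredness: $T$ is scattered precisely when $\cB(T)=\cB(T^\Psi)$ attains the maximal size $\theta_{t-1}$, which (since $\cD$ partitions $\PG_q(\Fqt^2)$) is equivalent to $T^\Psi$ meeting every element of $\cD$ in at most one point. Therefore $wW\cap W=\{0\}$ and $Th\dist Tk$.

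The computations here are all short; the only point demanding care is the passage between $\PG(1,E)$ and $\PG_q(\Fqt^2)$ through $\Psi$ — specifically the identification $(Th)^\Psi=(hW)^{\text{proj}}$ and the reformulation of ``$T$ scattered'' as ``$T^\Psi$ meets each element of $\cD$ in at most one point''. Once these two translations are fixed, the spread-element argument supplying a $2$-dimensional intersection is the whole content of the distant-part, and no lengthy calculation is required.
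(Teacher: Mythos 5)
Your proof is correct. The first assertion is handled exactly as in the paper: both arguments hinge on $\rho_c$ being central in $E$ for $c\in\Fq$, so that $E(\alpha\rho_h,\beta\rho_h)=E(\rho_c\alpha\rho_h,\rho_c\beta\rho_h)=E(\alpha\rho_k,\beta\rho_k)$. For the second assertion, however, you take a genuinely different route. The paper argues globally by counting: the union $\cP'$ of the points of the subspaces $(Th)^\Psi$ has at most $\theta_{t-1}^2$ points, with equality exactly when these subspaces are pairwise disjoint; on the other hand it contains the union $\cP$ of the $\theta_{t-1}$ pairwise disjoint spread elements of $L_T^\Psi$, which already has $\theta_{t-1}^2$ points when $T$ is scattered, forcing equality. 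You instead argue locally: a nonzero vector in $hW\cap kW$ yields two $\Fq$-independent vectors $y$ and $wy$ (with $w=hk^{-1}\notin\Fq$) of $W$ lying in one element of the Desarguesian spread $\cD$, contradicting the reformulation of scatteredness as ``$T^\Psi$ meets each spread element in at most one point.'' Both arguments are short and rest on the same two translations (the identification $(Th)^\Psi$ with the projectivisation of $hW$, and the spread characterization of scattered); yours has the small advantage of exhibiting the obstruction explicitly and of not requiring the inclusion $\cP\subseteq\cP'$, which the paper leaves to the reader, while the paper's counting version simultaneously records the useful byproduct that $\cP'$ is exactly the point set covered by $L_T^\Psi$ in the scattered case.
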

\begin{proof}
   If $h^{-1} k \in\Fq^*$ and $T=E(\alpha,\beta)$, then
  \[ Th=E(\alpha\rho_{h},\beta\rho_{h})=E(\rho_{h^{-1} k }\alpha\rho_{h},\rho_{h^{-1} k }\beta\rho_{h})=
  E(\alpha\rho_{ k },\beta\rho_{ k })=T k . \]

  Let $\cP'$ be the set of all points of $\PG_q(\Fqt^2)\cong\PG(2t-1,q)$
  belonging to the $(t-1)$-subspaces of ${L_T'}^\Psi$. By the previous
  paragraph, it follows $\#\cP'\le\theta_{t-1}^2$, and the equality holds if,
  and only if, for any $h, k $ the relation $h^{-1} k \in\Fqt\setminus\Fq$
  implies $Th\bigtriangleup T k $.

  Let $\cP$ be the set of all points of $\PG_q(\Fqt^2)$ belonging to the
  $(t-1)$-subspaces in $L_T^\Psi$. Then $\cP\subseteq \cP'$.

  If $T$ is scattered, then $\# \cP=\theta_{t-1}^2$.
\end{proof}
\begin{proposition}\label{p:varie}
  Let $T=E(\one,\beta)$ be a scattered point of $\PG(1,E)$. Then the following
  assertions hold:
  \begin{enumerate}[{\emph(}i{\emph)}]
    \item A point $P\in\PG(1,E)$ belongs to $L_T$ if, and only if, a
        $u\in\Fqt^*$ exists such that $P=E(\one,\rho_{u^\beta/u})$;
    \item the size of the set $I=\{u^\beta/u\mid u\in\Fqt^*\}$ is
        $\theta_{t-1}$;
    \item for any $u,v\in\Fqt^*$, $u$ and $v$ are $\Fq$-linearly dependent
        if, and only if, $u^\beta/u=v^\beta/v$;
    \item the dimension of $\ker\beta$ is at most one;
    \item $\beta$ is a singular endomorphism if, and only if, $
        E(\one,0)\in L_T$.
  \end{enumerate}
\end{proposition}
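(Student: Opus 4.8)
The plan is to derive (i) directly from the description of the distant relation via $\Psi$, and then to obtain (ii)--(v) as essentially bookkeeping consequences of (i) together with the scattered hypothesis.

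For (i) I would start from the remark recorded just before the statement: a point $E(\rho_a,\rho_b)$ is distant from $T=E(\one,\beta)$ if and only if $E(\rho_a,\rho_b)^\Psi$ and $T^\Psi$ are disjoint, equivalently the underlying $\Fq$-subspaces meet only in $0$. Writing out $\Psi$ from \eqref{e:Psi}, the subspace $T^\Psi$ has underlying $\Fq$-space the graph $\{(w,w^\beta)\mid w\in\Fqt\}$, while $E(\rho_a,\rho_b)^\Psi$ has underlying $\Fq$-space $\la(a,b)\ra_{q^t}=\{(ca,cb)\mid c\in\Fqt\}$. Hence $E(\rho_a,\rho_b)\nodist T$ precisely when there are $c\in\Fqt^*$ and $w\in\Fqt$ with $(ca,cb)=(w,w^\beta)$. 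If $a=0$ this forces $w=ca=0$ and then $cb=w^\beta=0$, contradicting $c,b\neq0$; so $a\neq0$. For $a\neq0$ I would set $u:=ca=w$, so the condition reads $u^\beta=cb=(b/a)u$, i.e.\ $b/a=u^\beta/u\in I$. Since $\rho$ is a ring homomorphism, left-scaling by $\rho_a^{-1}\in E^*$ gives $E(\rho_a,\rho_b)=E(\one,\rho_{b/a})$; and conversely, for $b/a=u^\beta/u$ the pair $(u,u^\beta)$ is a common nonzero vector of the two graphs. This yields (i).

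For (ii) and (iii) I would argue by counting. The forward implication of (iii) is immediate: if $v=cu$ with $c\in\Fq^*$ then $v^\beta/v=c\,u^\beta/(cu)=u^\beta/u$, because $\beta$ is $\Fq$-linear; thus $u\mapsto u^\beta/u$ is constant on each punctured $\Fq$-line, so $\#I\le\theta_{t-1}$. On the other hand the map $i\mapsto E(\one,\rho_i)$ is a bijection $I\to L_T$ (the equality $E(\one,\rho_i)=E(\one,\rho_j)$ forces the left scalar to be $\one$, whence $i=j$), so by (i) one has $\#I=\#L_T$. As $T$ is scattered, $\#L_T=\theta_{t-1}$, giving (ii); the induced map from the $\theta_{t-1}$ lines of $\PG(t-1,q)$ onto $I$ is then a surjection between equicardinal finite sets, hence injective, which is exactly the reverse implication of (iii). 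Parts (iv) and (v) fall out: two $\Fq$-independent vectors in $\ker\beta$ would both satisfy $u^\beta/u=0$, contradicting the injectivity just established, so $\dim\ker\beta\le1$, proving (iv); and $\beta$ is singular iff some $u\neq0$ has $u^\beta=0$, i.e.\ iff $0\in I$, which by (i) is equivalent to $E(\one,0)=E(\one,\rho_0)\in L_T$, proving (v).

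The only genuinely substantive step is (i): everything hinges on correctly translating the ring-theoretic condition $E(\rho_a,\rho_b)\nodist T$ into the existence of a common nonzero vector of the two graphs, and on the bookkeeping of left-scaling in $\PG(1,E)$ (in particular disposing of the degenerate case $a=0$). Once (i) is in place, the scattered hypothesis enters only through $\#L_T=\theta_{t-1}$, and (ii)--(v) are purely combinatorial. I would therefore spend most of the care on the intersection computation and on checking that $\rho$ is a ring homomorphism, so that $\rho_a^{-1}\rho_b=\rho_{b/a}$, which is what legitimizes the normalization $E(\rho_a,\rho_b)=E(\one,\rho_{b/a})$.
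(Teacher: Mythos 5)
Your proposal is correct and follows essentially the same route as the paper: (i) is obtained by translating $P\nodist T$ into the existence of a common nonzero vector of the two underlying $\Fq$-subspaces, (ii) from $\#I=\#L_T=\theta_{t-1}$, (iii) from the observation that $u\mapsto u^\beta/u$ is constant on punctured $\Fq$-lines together with the counting forced by (ii), and (iv), (v) as immediate consequences. You merely spell out some bookkeeping (the case $a=0$, the normalization $E(\rho_a,\rho_b)=E(\one,\rho_{b/a})$) that the paper leaves implicit.
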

\begin{proof}
  Let $P=E(\rho_a,\rho_b)$ be a point. Then $P\in L_T$ holds precisely when the
  $(t-1)$-subspaces $P^\Psi$ and $T^\Psi$ are not disjoint; that is, there are
  two nonzero elements of $\Fqt$, say $u$ and $v$, such that $u=va$ and
  $u^\beta=vb$. This is equivalent to $a\neq0$ and $u^\beta=a^{-1}bu$. This
  implies $(i)$, and consequently $(v)$.

  The size of $I$ equals the size of $L_T$, and this implies $(ii)$.

  If $r\in\Fq^*$ and $u\in\Fqt^*$, then $(ru)^\beta/(ru)=u^\beta/u$. This
  implies that the size of the image of the map $u\in\Fqt^*\mapsto
  u^\beta/u\in\Fqt$ is at most $\theta_{t-1}$, and the equality holds only if
  condition $(iii)$ is satisfied. The last condition implies $(iv)$.
\end{proof}
Take notice that $(i)$ and $(v)$ hold irrespective of whether the point
$T\in\PG(1,E)$ is scattered or not.

The following result is merely a reformulation of \cite[Prop.~2]{LaShZa2015},
with $\beta\in E$ playing the role of the matrix $A$ from there.

\begin{proposition}\label{p:PGxPG-embed}
  Let $T=E(\one,\beta)$ be a scattered point. The map
  \begin{equation}\label{e:segremb}
        \varepsilon :\left(\la h\ra_q,\la(u,u^\beta)\ra_q\right)\mapsto\la(hu,hu^\beta)\ra_q
  \end{equation}
  is a projective embedding of the product space $\PG_q(\Fqt)\times T^\Psi$
  into $\PG_q(\Fqt^2)$, that is, is an injective mapping such that the image of
  any line of the product space is a line of $\PG_q(\Fqt^2)$.
\end{proposition}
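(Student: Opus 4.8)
The plan is to unpack the three things hidden in the phrase \emph{projective embedding}: that $\varepsilon$ is well defined as a map between projective spaces, that it is injective, and that it carries lines of the product geometry to lines of $\PG_q(\Fqt^2)$. Well-definedness is immediate. Replacing $h$ by $\lambda h$ and the representative $(u,u^\beta)$ by $\mu(u,u^\beta)$ with $\lambda,\mu\in\Fq^*$ rescales $(hu,hu^\beta)$ by $\lambda\mu\in\Fq^*$, because multiplication by $h$ in $\Fqt$ and the endomorphism $\beta$ are both $\Fq$-linear; hence the image point $\la(hu,hu^\beta)\ra_q$ does not change and genuinely lies in $\PG_q(\Fqt^2)$.

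Injectivity is the heart of the matter, and it is the only place where the scattered hypothesis is used. Assume $\la(hu,hu^\beta)\ra_q=\la(h'u',h'(u')^\beta)\ra_q$; comparing the two coordinates and dividing the second by the first in $\Fqt$ gives $u^\beta/u=(u')^\beta/u'$. By Proposition~\ref{p:varie}(iii) --- which is exactly where ``$T$ scattered'' is invoked --- this forces $u$ and $u'$ to be $\Fq$-linearly dependent, say $u'=\nu u$ with $\nu\in\Fq^*$, so the two second-factor points agree; feeding this back into the first coordinate then yields $\la h\ra_q=\la h'\ra_q$, and the two arguments of $\varepsilon$ coincide. I would also stress that ``scattered'' is indispensable here: if $u^\beta/u=(u')^\beta/u'$ held for some $\Fq$-independent $u,u'$, then the choice $h=u'$, $h'=u$ would produce equal images from distinct arguments, so injectivity would genuinely fail.

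It remains to treat the two families of lines of the product space $\PG_q(\Fqt)\times T^\Psi$ separately. Fixing $\la h\ra_q$ and letting the second factor run over a line of $T^\Psi$ amounts to letting $u$ range over a $2$-dimensional $\Fq$-subspace $W\subseteq\Fqt$ (recall $u\mapsto\la(u,u^\beta)\ra_q$ is a projective isomorphism onto $T^\Psi$); the map $u\mapsto(hu,hu^\beta)$ is $\Fq$-linear and injective because $h\neq0$, so it carries $W$ onto a $2$-dimensional subspace of $\Fqt^2$, whose projectivization is a line. Symmetrically, fixing the second factor and letting $\la h\ra_q$ run over a line of $\PG_q(\Fqt)$ means letting $h$ range over a $2$-dimensional $\Fq$-subspace; now $h\mapsto(hu,hu^\beta)$ is $\Fq$-linear and injective because $u\neq0$, and again a line maps to a line. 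Since the point map is injective, distinct lines receive distinct line images, so $\varepsilon$ is the asserted embedding. The one real obstacle is recognizing that it is injectivity, not the line condition, that rests on the scattered assumption via Proposition~\ref{p:varie}(iii).
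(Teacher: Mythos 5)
Your proof is correct, but it is worth pointing out that the paper does not actually prove this proposition at all: it states that the result ``is merely a reformulation of [LaShZa2015, Prop.~2]'', with $\beta$ playing the role of a matrix $A$ there, and moves on. So your direct verification is a genuine addition rather than a variant of the paper's argument. What you supply is exactly the right decomposition: well-definedness is trivial $\Fq$-linearity bookkeeping; injectivity reduces, after dividing the two coordinates of $(h'u',h'(u')^\beta)=\lambda(hu,hu^\beta)$, to $u^\beta/u=(u')^\beta/u'$, which Proposition~\ref{p:varie}(iii) converts into $\Fq$-dependence of $u$ and $u'$ precisely because $T$ is scattered; and the two families of lines are handled by observing that $u\mapsto(hu,hu^\beta)$ and $h\mapsto(hu,hu^\beta)$ are injective $\Fq$-linear maps, so $2$-dimensional subspaces go to $2$-dimensional subspaces. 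Your closing observation that only injectivity, not the line condition, uses the scattered hypothesis is consistent with the Remark immediately following the proposition in the paper, which notes that for non-scattered $T$ the map is still defined but fails to be injective; your explicit witness ($h=u'$, $h'=u$ for $\Fq$-independent $u,u'$ with equal ratios) makes that remark concrete. The only trade-off is length: the paper buys brevity by outsourcing to the matrix formulation of [LaShZa2015], while your argument is self-contained and stays entirely inside the endomorphism-ring setup of the present paper.
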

\begin{remark}
In the case of non-scattered linear sets, the map $\varepsilon$ is not an
embedding, but the image of $\varepsilon$ is still a non-injective projection
of a Segre variety \cite{LaZa201X}.
\end{remark}

In \cite[Thm.~1]{Dy1991} a result similar to the following one is proved in
terms of the matrix group $\GL_2(q^t)$.
\begin{proposition}\label{p:hans1}%
Let $\kappa\in\PGaL_{2}(q^t)$ be a collineation of $\PG(1,q^t)$ whose
accompanying automorphism $\eta$ is in $\gal$. After embedding $\PG(1,q^t)$ in
$\PG(1,E)$ according to \eqref{e:embed}, the collineation
$\iota^{-1}\kappa\iota$ of $\PG(1,F )$ can be extended to at least one
projectivity of $\PG(1,E)$. Conversely, the restriction to $\PG(1,F)$ of any
projectivity of $\PG(1,E)$ that fixes $\PG(1,F)$ as a set is a collineation
with accompanying automorphism in $\gal$.
\end{proposition}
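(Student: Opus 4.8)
The plan is to treat the two directions separately, the common engine being the conjugation identity $\rho_a\,\eta=\eta\,\rho_{a^\eta}$, valid in $E$ for every $a\in\Fqt$ and every $\eta\in\gal$. Indeed such an $\eta$ is $\Fq$-linear and bijective, hence a unit of $E=\End_q(\Fqt)$, and the identity (checked on an arbitrary $x$ by $x^{\rho_a\eta}=(ax)^\eta=a^\eta x^\eta=x^{\eta\rho_{a^\eta}}$) says precisely that $\eta$, viewed as a unit of $E$, normalizes the subring $F$ and induces $\eta$ on it by conjugation. Everything below is organized around this fact and the fact that $a\mapsto\rho_a$ is a ring homomorphism onto the commutative subring $F$.

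For the extension direction I would write $\kappa$ via a semilinear representative $(x,y)\mapsto(x^\eta,y^\eta)M$ with $M=(m_{ij})\in\GL_2(q^t)$ and accompanying $\eta\in\gal$, and then exhibit the explicit matrix $N=\diag(\eta,\eta)\cdot(\rho_{m_{ij}})\in\GL_2(E)$. A one-line computation using the conjugation identity gives $(\rho_a,\rho_b)\,N=\eta\,(\rho_{a^\eta m_{11}+b^\eta m_{21}},\,\rho_{a^\eta m_{12}+b^\eta m_{22}})$, and since the left factor $\eta\in E^*$ is absorbed on passing to the point $E(\,\cdot\,,\cdot\,)$, the projectivity defined by $N$ restricts to $\iota^{-1}\kappa\iota$ on $\PG(1,F)$. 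Invertibility of $N$ is immediate: $\diag(\eta,\eta)$ is a unit as $\eta\in E^*$, while $(\rho_{m_{ij}})$ is a unit with inverse $(\rho_{m'_{ij}})$, where $M^{-1}=(m'_{ij})$, because the entrywise map carries $\GL_2(q^t)$ into $\GL_2(F)\subseteq\GL_2(E)$ ($F$ being commutative, it respects matrix products).

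For the converse, let $\varphi$, with matrix $N\in\GL_2(E)$, fix $\PG(1,F)$ setwise. I would first reduce to a frame-fixing situation: the images of the three frame points $E(\one,0),E(0,\one),E(\one,\one)$ again lie on $\PG(1,F)$, so by sharp $3$-transitivity of $\PGL_2(q^t)$ on $\PG(1,q^t)$ together with the first direction there is a projectivity $\tilde\pi_0$ of $\PG(1,E)$, extending an element of $\PGL_2(q^t)$, carrying these images back to the frame. Then $\psi:=\varphi\tilde\pi_0$ fixes $\PG(1,F)$ setwise and fixes the three frame points. Fixing $E(\one,0)$ and $E(0,\one)$ forces the matrix of $\psi$ to be diagonal, and fixing $E(\one,\one)$ forces it to be $\diag(\alpha,\alpha)$ with $\alpha\in E^*$; testing the affine points $E(\one,\rho_b)$, whose images are $E(\one,\alpha^{-1}\rho_b\alpha)$, against the membership criterion for $\PG(1,F)$ shows that setwise fixing is equivalent to $\alpha^{-1}F\alpha=F$, i.e. $\alpha\in N_{E^*}(F)$.

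The main point — the one genuinely non-formal step — is to identify the automorphism of $F\cong\Fqt$ induced by conjugation by such an $\alpha$. This conjugation $\rho_b\mapsto\alpha^{-1}\rho_b\alpha$ is a ring automorphism of $F$, hence corresponds to some $\bar\theta\in\mathrm{Aut}(\Fqt)$; the crucial observation is that it fixes $\Fq$ pointwise, because for $r\in\Fq$ the endomorphism $\rho_r=r\,\one$ is an $\Fq$-scalar and therefore central in $E=\End_q(\Fqt)$, so $\alpha^{-1}\rho_r\alpha=\rho_r$. Thus $\bar\theta\in\gal$, and a short computation gives $\psi\colon E(\rho_a,\rho_b)\mapsto E(\rho_{a^{\bar\theta}},\rho_{b^{\bar\theta}})$, so $\psi$ restricts to the collineation of $\PG(1,q^t)$ with trivial linear part and accompanying automorphism $\bar\theta\in\gal$. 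Since the collineations with accompanying automorphism in $\gal$ form a group, namely $\PGL_2(q^t)\rtimes\gal$, and both $\psi$ and $\tilde\pi_0$ restrict to members of it, so does $\varphi=\psi\tilde\pi_0^{-1}$. I expect the frame reduction and this centrality argument to be where care is needed; all remaining steps are bookkeeping with the conjugation identity and the homomorphism $a\mapsto\rho_a$.
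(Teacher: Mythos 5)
Your proof is correct and follows essentially the same route as the paper: the extension is given by the explicit matrix $\diag(\eta,\eta)\cdot(\rho_{m_{ij}})$ via the identity $\rho_a\eta=\eta\rho_{a^\eta}$, and the converse reduces by sharp $3$-transitivity to a frame-fixing projectivity $\diag(\alpha,\alpha)$ whose conjugation normalizes $F$ and lies in $\gal$ because the $\Fq$-scalars are central in $E$. The only difference is cosmetic: the paper goes one step further and factors $\delta=\eta\rho_d$ to obtain the explicit matrix form \eqref{e:mu_final}, which it reuses later, but this is not needed for the proposition itself.
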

\begin{proof}
There is a matrix $(m_{ij})\in\GL_2(q^t)$ such that
\begin{equation*}
    \la (a,b)\ra_{q^t}\stackrel{\kappa}\mapsto \la (a^\eta,b^\eta)\ra_{q^t}\cdot (m_{ij})
    \mbox{~for all~} \la (a,b)\ra_{q^t}\in\PG(1,q^t).
\end{equation*}
For all $x,a\in \Fqt$ we have $xa^\eta = (x^{\eta^{-1}}a)^\eta$ and
so $\rho_{a^\eta}=\eta^{-1}\rho_a\eta$. The permutation of
$\PG(1,E)$ given by
\begin{equation}\label{e:extension}
     E(\alpha,\beta)\mapsto  E(\eta^{-1}\alpha\eta,\eta^{-1}\beta\eta)\cdot (\rho_{m_{ij}})
    \mbox{~for all~} E(\alpha,\beta) \in\PG(1,E)
\end{equation}
is a projectivity, since the automorphism of $E$ acting on $\alpha$ and $\beta$
is inner. By construction, this projectivity extends the collineation
$\iota^{-1}\kappa\iota$ of $\PG(1,F )$.
\par
Conversely, let $\pi$ be a projectivity of $\PG(1,E)$ that fixes $\PG(1,F )$ as
a set. Since $\PGL(2,q^t)$ acts (sharply) $3$-transitively on $\PG(1,q^t)$
there is a (unique) projectivity $\lambda$ of $\PG(1,q^t)$ such that the images
of $E(\one,0)$, $E(0,\one)$, $E(\one,\one)$ under $\iota^{-1}\lambda\iota$ and
$\pi$ are the same. We choose matrices $(\pi_{ij})\in\GL_2(E)$ and
$(c_{ij})\in\GL_2(q^t)$ that describe $\pi$ and $\lambda$, respectively. Then
$(\pi_{ij})\cdot(\rho_{c_{ij}})^{-1}$ induces a projectivity of $\PG(1,E)$ that
fixes each of the points $E(\one,0)$, $E(0,\one)$, $E(\one,\one)$ and also the
$F $-chain $\PG(1,F )$. So there is a $\delta\in E^*$ with
\begin{equation*}
    (\pi_{ij})=\diag(\delta,\delta)\cdot(\rho_{c_{ij}})
\end{equation*}
and for each $b\in\Fqt$ there is a unique $b'\in\Fqt$ such that
\begin{equation*}
     E(\one,\rho_b)\cdot\diag(\delta,\delta) =  E(\delta,\rho_b\delta)
    =  E(\one,\rho_{b'}) = E(\delta,\delta\rho_{b'}).
\end{equation*}
This leads us to $\delta^{-1}\rho_b\delta= \rho_{b'}$ for all $b\in\Fqt$. Thus
the inner automorphism of $E$ given by $\delta$ restricts to an automorphism of
the field $F $. Going back to $\Fqt$ shows that $\eta:\Fqt\to\Fqt:b\mapsto b'$
is an automorphism of $\Fqt$. Furthermore, we read off $\eta\in\gal$ from
$\rho_b$ being in the centre of $E$ for all $b\in\Fq$.
\par
Let $d:=1^\delta\in\Fqt^*$ and choose any $b\in\Fqt$. Calculating the image of
$d$ under $\delta^{-1}\rho_b\delta= \rho_{b^\eta}$ in two ways gives $b^\delta
= d b^\eta $, whence $\delta=\eta\rho_d$. This leads us finally to
\begin{equation}\label{e:mu_final}
    (\pi_{ij})=\diag(\eta,\eta)\cdot(\rho_d\rho_{c_{ij}})=\diag(\eta,\eta)\cdot(\rho_{d c_{ij}}) .
\end{equation}
We now repeat the first part of the proof with $(d c_{ij})$ instead
of $(m_{ij})$. This gives the projectivity
\begin{equation}\label{e:struttura}
     E(\alpha,\beta)\mapsto  E(\eta^{-1}\alpha\eta,\eta^{-1}\beta\eta)\cdot (\rho_{dc_{ij}})
    \mbox{~for all~} E(\alpha,\beta) \in\PG(1,E),
\end{equation}
which obviously coincides with $\pi$.
\end{proof}
The collineation $\kappa$ from the previous proposition can be extended in
precisely $\theta_{t-1}$ different ways to a projectivity of $\PG(1,E)$.
Even though this could be derived easily from well known results about spreads \cite[Sect.~1]{Knarr1995}, we give a short direct proof.
\begin{proposition}\label{p:hans3}%
There are precisely $\theta_{t-1}$ projectivities of $\PG(1,E)$ that fix
$\PG(1,F)$ pointwise.
\end{proposition}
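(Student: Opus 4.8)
The plan is to pin down all such projectivities explicitly and then count the distinct ones. The starting observation is that the three points $E(\one,0)$, $E(0,\one)$, $E(\one,\one)$ all lie in $\PG(1,F)$, so any projectivity fixing $\PG(1,F)$ pointwise must fix these three. Choosing a representing matrix $(\pi_{ij})\in\GL_2(E)$ acting on the right of row vectors, the conditions $E(\one,0)=E(\pi_{11},\pi_{12})$, $E(0,\one)=E(\pi_{21},\pi_{22})$, $E(\one,\one)=E(\pi_{11},\pi_{22})$ force $\pi_{12}=\pi_{21}=0$ and $\pi_{11}=\pi_{22}=:\delta$ for some $\delta\in E^*$. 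Hence the matrix is the scalar matrix $\diag(\delta,\delta)$, and it remains to determine which $\delta$ occur.

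Next I would impose fixedness of the remaining points. Cancelling a left scalar, every point of $\PG(1,F)$ with nonzero first coordinate can be written as $E(\one,\rho_c)$ with $c\in\Fqt$, and $E(0,\one)$ accounts for the rest. Requiring $\diag(\delta,\delta)$ to fix $E(\one,\rho_c)$ gives $E(\delta,\rho_c\delta)=E(\one,\rho_c)$, which forces $\rho_c\delta=\delta\rho_c$ for every $c\in\Fqt$; that is, $\delta$ centralises $F$. An $\Fq$-endomorphism $\delta$ commuting with all left multiplications $\rho_c$ satisfies $\delta(c)=c\,\delta(1)$ for all $c$, so $\delta=\rho_{\delta(1)}$; equivalently, since $F\cong\Fqt$ has $\Fq$-degree $t$ inside $E\cong M_t(\Fq)$, the double centraliser theorem yields $C_E(F)=F$. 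Thus $\delta=\rho_\lambda$ with $\lambda\in\Fqt^*$, and the matrices in question are exactly $\diag(\rho_\lambda,\rho_\lambda)$, $\lambda\in\Fqt^*$. Conversely each such matrix acts on an arbitrary $E(\rho_a,\rho_b)$ as the left scalar $\rho_\lambda$ and hence fixes $\PG(1,F)$ pointwise.

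Finally I would count the distinct projectivities so obtained. Two matrices $\diag(\rho_\lambda,\rho_\lambda)$ and $\diag(\rho_\mu,\rho_\mu)$ induce the same projectivity iff they agree on a point outside $\PG(1,F)$; testing on $E(\one,\sigma)$ with $\sigma:u\mapsto u^q$ reduces the condition to $\rho_{\mu/\lambda}$ commuting with $\sigma$, and $\rho_\nu\sigma=\sigma\rho_\nu$ holds exactly when $\nu^q=\nu$, i.e.\ $\nu\in\Fq$. Therefore $\lambda\mapsto\diag(\rho_\lambda,\rho_\lambda)$ induces a surjection from $\Fqt^*$ onto the set of projectivities fixing $\PG(1,F)$ pointwise whose fibres are precisely the cosets of $\Fq^*$, giving exactly $[\Fqt^*:\Fq^*]=(q^t-1)/(q-1)=\theta_{t-1}$ of them.

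The main obstacle is the centraliser identity $C_E(F)=F$, together with careful bookkeeping of the left-module/right-matrix conventions in the reduction to a scalar matrix. If one prefers to avoid the explicit reduction, the structural part can instead be read off from the converse direction of Proposition~\ref{p:hans1}: a projectivity fixing $\PG(1,F)$ pointwise induces the identity collineation on $\PG(1,q^t)$, which forces $\eta=\mathrm{id}$ and $(c_{ij})$ scalar in \eqref{e:mu_final}, so that $(\pi_{ij})=\diag(\rho_\lambda,\rho_\lambda)$ again; the counting step is then unchanged.
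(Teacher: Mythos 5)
Your proof is correct and follows essentially the same route as the paper: both reduce a projectivity fixing the three standard points to a matrix $\diag(\delta,\delta)$, deduce from pointwise fixedness that $\delta$ centralises $F$ and hence equals some $\rho_\lambda$, and then count modulo the kernel, obtaining $(q^t-1)/(q-1)$. The only cosmetic differences are that the paper reuses the second half of the proof of Prop.~\ref{p:hans1} (an alternative you also mention) and counts via central scalar matrices rather than by testing at $E(\one,\sigma)$.
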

\begin{proof}
Let $\pi$ be a projectivity of $\PG(1,E)$ that fixes $\PG(1,F)$ pointwise. We
repeat the second part of the proof of Prop.~\ref{p:hans1} under this stronger
assumption, while maintaining all notations from there. However, in our current
setting we may choose $(c_{ij})=\diag(1,1)\in\GL_2(q^t)$. We are thus led to
$(\pi_{ij})=\diag(\delta,\delta)$ for some $\delta\in E^*$. This $\delta$ has
to satisfy now $\delta^{-1}\rho_b\delta=\rho_b$ for all $b\in\Fqt$, which in
turn gives that $\eta$ is the trivial automorphism of $\Fqt$. Letting
$d:=1^\delta$, as we did before, gives therefore that $\pi$ is given by the
matrix
\begin{equation*}
    (\pi_{ij})=\diag(\rho_d,\rho_d) \mbox{~with~} d\in\Fqt^*.
\end{equation*}
\par
Conversely, for all $d\in\Fqt^*$ the matrix $\diag(\rho_d,\rho_d)$ determines a
projectivity of $\PG(1,E)$ that fixes $\PG(1,F)$ pointwise. Two matrices of
this kind give rise to the same projectivity if, and only if, they differ by a
factor $\diag(\gamma,\gamma)$, where $\gamma$ is an invertible element from the
centre of $E$. This condition for $\gamma$ is equivalent to $\gamma=\rho_f$
with $f\in\F_q^*$, whence the assertion follows from
$\theta_{t-1}=(q^t-1)/(q-1)=(\#\Fqt^*)/(\#\Fq^*)$.
\end{proof}

\begin{proposition}\label{p:varie_1}
  If $\pi\in\PGL_2(E)$ stabilizes $\PG(1,F)$, then $L_{T^\pi}=(L_T)^\pi$ and
  $L'_{T^\pi}=(L'_T)^\pi$ for each $T\in\PG(1,E)$.
\end{proposition}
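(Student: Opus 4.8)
The plan is to establish the two set-equalities separately, both exploiting the hypothesis that $\pi$ stabilizes $\PG(1,F)$ together with the fact, recorded in Subsection~\ref{ss:notation}, that every projectivity of $\PG(1,E)$ preserves the distant relation $\dist$ (and hence also its negation $\nodist$). For the first equality the key observation is that $L_T$ consists exactly of the points of the $F$-chain $\PG(1,F)$ that are non-distant from $T$; for the second I would pass to the group-theoretic description of $L'_T$ as an orbit and show that $\pi$ normalizes the relevant group.

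For $L_{T^\pi}=(L_T)^\pi$, first I would rewrite the definition as $L_T=\{X\in\PG(1,F)\mid X\nodist T\}$, using that the points $E(\rho_a,\rho_b)$ range over $\PG(1,F)$. Take $X\in L_T$. Since $\pi$ is a bijection stabilizing $\PG(1,F)$ setwise, $X^\pi\in\PG(1,F)$, and since $\pi$ preserves $\nodist$ we get $X^\pi\nodist T^\pi$; hence $X^\pi\in L_{T^\pi}$, giving $(L_T)^\pi\subseteq L_{T^\pi}$. The reverse inclusion is obtained verbatim by applying the same argument to $\pi^{-1}$ (which also stabilizes $\PG(1,F)$ and preserves $\nodist$): from $Y\in L_{T^\pi}$ one gets $Y^{\pi^{-1}}\in\PG(1,F)$ and $Y^{\pi^{-1}}\nodist (T^\pi)^{\pi^{-1}}=T$, so $Y^{\pi^{-1}}\in L_T$ and $Y\in(L_T)^\pi$.

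For $L'_{T^\pi}=(L'_T)^\pi$, I would invoke Prop.~\ref{p:hans3} together with the remark after Definition~\ref{d:L(T)}: the projectivities given by the matrices $\diag(\rho_h,\rho_h)$, $h\in\Fqt^*$, are precisely those fixing $\PG(1,F)$ pointwise, and they form a group $G$ of order $\theta_{t-1}$ with $L'_T=T^G$ and $L'_{T^\pi}=(T^\pi)^G$. The crucial step is that $\pi$ normalizes $G$: for $g\in G$ and any $X\in\PG(1,F)$ one computes $X^{\pi^{-1}g\pi}=((X^{\pi^{-1}})^g)^\pi=(X^{\pi^{-1}})^\pi=X$, because $X^{\pi^{-1}}\in\PG(1,F)$ is fixed by $g$; thus $\pi^{-1}G\pi\subseteq G$, and the same argument applied to $\pi^{-1}$ yields $\pi^{-1}G\pi=G$, i.e. $G\pi=\pi G$. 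Consequently $(L'_T)^\pi=T^{G\pi}=T^{\pi G}=(T^\pi)^G=L'_{T^\pi}$.

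The computations are light; the only point that requires care is the second equality, where one must recognize that the setwise hypothesis ``$\pi$ stabilizes $\PG(1,F)$'' upgrades to the normalizer statement $\pi G=G\pi$ via the pointwise-fixing characterization of $G$. An equivalent, purely matrix-theoretic route---writing $\pi$ by a matrix $M\in\GL_2(E)$ and checking that $M\,\diag(\rho_h,\rho_h)=\diag(\rho_{h'},\rho_{h'})\,M$ up to central scalars---would work as well, but the orbit/normalizer formulation avoids the bookkeeping and makes transparent why both equalities hold simultaneously.
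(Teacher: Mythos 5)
Your proof is correct. The first equality is handled exactly as in the paper: rewrite $L_T=\{X\in\PG(1,F)\mid X\nodist T\}$ and use that $\pi$ permutes $\PG(1,F)$ and preserves $\nodist$ (the paper compresses your two inclusions into one chain of set equalities, but the content is identical). For the second equality you take a genuinely different route. The paper goes back to the explicit normal form \eqref{e:struttura} of a projectivity stabilizing $\PG(1,F)$, obtained in Prop.~\ref{p:hans1}, and verifies by direct computation that $T^\pi h=(Th^{\eta^{-1}})^\pi$, so that $\pi$ permutes the points $Th$ among the points $T^\pi k$. You instead use the orbit description $L'_T=T^G$, where $G$ is the group of \emph{all} projectivities fixing $\PG(1,F)$ pointwise, and show $\pi^{-1}G\pi=G$ by a conjugation argument; the identity $(T^G)^\pi=(T^\pi)^{\pi^{-1}G\pi}$ then finishes the proof. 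Both arguments lean on earlier results: the paper's on the structure theorem Prop.~\ref{p:hans1}, yours on the characterization of $G$ implicit in Prop.~\ref{p:hans3} (it is essential that $G$ is the \emph{full} pointwise stabilizer, since otherwise $\pi^{-1}g\pi$ fixing $\PG(1,F)$ pointwise would not force $\pi^{-1}g\pi\in G$ --- you correctly invoke this). Your version is coordinate-free and avoids tracking the automorphism $\eta$ and the matrix $(\rho_{dc_{ij}})$; the paper's version is a one-line computation once \eqref{e:struttura} is available and, as a by-product, records the finer fact $T^\pi h=(Th^{\eta^{-1}})^\pi$, i.e.\ exactly how $\pi$ matches up the individual points of $L'_T$ and $L'_{T^\pi}$.
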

\begin{proof}
  The first equation can be derived as follows:
  \begin{eqnarray*}
    L_{T^\pi}&=&\left\{P\in\PG(1,F)\mbox{ s.t. }P\nodist T^\pi\right\}=
    \left\{P^\pi\mbox{ s.t. }P\in\PG(1,F),\,P^\pi\nodist T^\pi\right\}\\
    &=&\left\{P^\pi\mbox{ s.t. }P\in\PG(1,F),\,
    P\nodist T\right\}=L_T^\pi.
  \end{eqnarray*}
  Let $h\in\Fqt^*$. Taking into account the structure of the projectivity
  $\pi$, described in (\ref{e:struttura}), $T^\pi h=(Th^{\eta^{-1}})^\pi$,
  whence $L'_{T^\pi}=(L'_T)^\pi$.
\end{proof}

\begin{remark}
  The investigation of scattered points can be restricted taking into account
  that if $T$ is a scattered point of $\PG(1,E)$, then there exist a
  projectivity $\pi$ of $\PG(1,E)$ and an element $\beta\in E^*$, such that
  $\PG(1,F)^\pi=\PG(1,F)$, $1\in\Spec(\beta)$, and $T^\pi=E(\one,\beta)$ (cf.
  \cite[Rem.~4.2]{LuMaPoTr2014}).
\end{remark}

\begin{lemma}\label{l:hans1}%
Let $s$ be the greatest element of $\{1,2,\ldots,t-1\}$ that divides $t$. Then
any two distinct $F $-chains of $\PG(1,E)$ have at most $q^s+1$ common points.
\end{lemma}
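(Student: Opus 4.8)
The plan is to reduce to the situation where the two chains share at least three points and then exploit the transitivity on $F$-chains provided by Proposition~\ref{p:0}. If two distinct $F$-chains $C_1$ and $C_2$ meet in at most two points we are done, since $q^s+1\ge q+1\ge 3$. So assume $\#(C_1\cap C_2)\ge 3$. Since every $F$-chain is a projective image of $\PG(1,F)$ and projectivities preserve the number of common points of two chains, I may first assume $C_1=\PG(1,F)$. Choosing three common points $P,Q,R\in C_1\cap C_2$ and applying Proposition~\ref{p:0} with both chains taken to be $\PG(1,F)$, there is a projectivity stabilizing $\PG(1,F)$ that carries $(P,Q,R)$ to the standard triple $(E(\one,0),E(0,\one),E(\one,\one))$; so I may further assume that $C_1$ and $C_2$ both pass through these three points. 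A second application of Proposition~\ref{p:0} (now with $C_1=\PG(1,F)$, $C_2$ arbitrary, and the standard triple fixed) yields $C_2=\PG(1,F)^\pi$ for a projectivity $\pi$ fixing $E(\one,0),E(0,\one),E(\one,\one)$. A short computation with the right action of $\GL_2(E)$ shows that such a $\pi$ is represented by a matrix $\diag(a,a)$ with $a\in E^*$, so $C_2=\{E(\rho_x a,\rho_y a)\mid \la(x,y)\ra_{q^t}\in\PG(1,q^t)\}$.

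Next I would pin down the common points algebraically. Every point of $C_1=\PG(1,F)$ is either $E(\one,0)$ or $E(\rho_w,\one)$ for a unique $w\in\Fqt$. Pulling $E(\rho_w,\one)$ back by $\pi^{-1}=\diag(a^{-1},a^{-1})$ and normalizing by the left factor $a\in E^*$ gives $E(a\rho_w a^{-1},\one)$, which lies in $\PG(1,F)$ precisely when $a\rho_w a^{-1}\in F$. Hence the common points of $C_1$ and $C_2$ other than $E(\one,0)$ are in bijection with $\{w\in\Fqt\mid \rho_w\in F\cap a^{-1}Fa\}$, whose cardinality is $\#(F\cap a^{-1}Fa)$. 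The key observation is that $a^{-1}Fa$ is the image of the subfield $F\cong\Fqt$ under the inner automorphism $x\mapsto a^{-1}xa$ of $E$, so it is again a subfield of $E$ isomorphic to $\Fqt$, and therefore $F\cap a^{-1}Fa$ is a subfield of $F$.

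To finish, I would note that this intersection equals $F$ exactly when $aFa^{-1}=F$, and in that case $\pi$ stabilizes $\PG(1,F)$ as a set, forcing $C_2=C_1$, which is excluded. Thus for distinct chains $F\cap a^{-1}Fa$ is a proper subfield of $F\cong\Fqt$, hence isomorphic to $\F_{q^d}$ for some divisor $d$ of $t$ with $d<t$; by the maximality of $s$ this gives $\#(F\cap a^{-1}Fa)\le q^s$. Adding the single remaining common point $E(\one,0)$ yields $\#(C_1\cap C_2)\le q^s+1$, as claimed. I expect the main obstacle to be the normalization step: verifying that the stabilizer of the standard triple consists exactly of the maps $E(\alpha,\beta)\mapsto E(\alpha a,\beta a)$ with $a\in E^*$, and that $aFa^{-1}=F$ is equivalent to $C_1=C_2$ rather than merely producing extra incidences. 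Once the common points are identified with the intersection of two conjugate copies of $\Fqt$ inside $E$, the bound from the subfield lattice of $\Fqt$ is immediate.
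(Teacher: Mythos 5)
Your proof is correct and follows essentially the same route as the paper's: reduce to the case of at least three common points, normalize via Proposition~\ref{p:0} so that $C_1=\PG(1,F)$ and both chains contain the standard triple, deduce that the connecting projectivity is given by $\diag(a,a)$ with $a\in E^*$, and identify the common points with $F\cap a^{-1}Fa$ together with one extra point. The only step left tacit (in the paper as well) is that this intersection contains $\rho_c$ for all $c\in\F_q$ because these elements are central in $E$, which is what guarantees that the proper subfield is of the form $\F_{q^d}$ with $d\mid t$ (hence of order at most $q^s$) rather than merely an arbitrary subfield of $\F_{q^t}$.
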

\begin{proof}
Due to $3=2^1+1\leq q^s+1 $ it is enough to consider two distinct $F$-chains
$C_1$ and $C_2$ with at least three distinct common points. By Prop.~\ref{p:0},
we may assume these points to be $E(\one,0)$, $E(0,\one)$, $E(\one,\one)$ and
$C_1=\PG(1,F )$. Applying Prop.~\ref{p:0} once more shows that there is a
projectivity $\pi$ of $\PG(1,E)$ that fixes each of the points $E(\one,0)$,
$E(0,\one)$, $E(\one,\one)$ and takes $C_1$ to $C_2$. Consequently, there is a
$\delta\in E^*$ such that
\begin{equation*}
     E(\alpha,\beta)^\pi=E(\alpha,\beta)\diag(\delta,\delta)=E(\delta^{-1}\alpha\delta,\delta^{-1}\beta\delta)
    \mbox{~for all~}  E(\alpha,\beta)\in\PG(1,E).
\end{equation*}
This gives
\begin{equation*}
    {C_1\cap C_1^\pi=\{ E(\one,\rho)\mid \rho\in F\cap (\delta^{-1}F \delta) \}\cup\{ E(0,\one)\}.}
\end{equation*}
The intersection $F \cap\delta^{-1}F \delta$ is a proper subfield of $F $,
since $\delta^{-1}F \delta$ is an isomorphic copy of $F $ in $E$ and $C_1\neq C_2$
implies $F \neq \delta^{-1}F \delta$. This gives $\#(C_1\cap C_2)= \#(F
\cap\delta^{-1}F \delta)+1\leq q^s+1$.
\end{proof}

%%%%%%%%%%%%%%%%%%%%%%%%%%%%%%%%
\begin{proposition}\label{p:hans2}%
Let $T$ be a scattered point of $\PG(1,E)$ and assume that $t\geq 3$. Then
$L_T$ is contained in no $F $-chain other than $\PG(1,F )$.
\end{proposition}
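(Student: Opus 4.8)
The plan is to pit the two competing constraints on $L_T$ against each other: its large size, forced by the scattered hypothesis, versus the combinatorial ceiling on chain intersections supplied by Lemma~\ref{l:hans1}. The first step is an elementary but decisive observation: by Definition~\ref{d:L(T)} every element of $L_T$ has the shape $E(\rho_a,\rho_b)$, so $L_T\subseteq\PG(1,F)$ regardless of the point $T$. Thus the chain $\PG(1,F)$ automatically contains $L_T$, and the whole content of the proposition is that no \emph{other} $F$-chain can.

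Next I would argue by contradiction. Suppose $L_T$ is contained in some $F$-chain $C$ with $C\neq\PG(1,F)$. Combining this with the inclusion above yields $L_T\subseteq C\cap\PG(1,F)$. Since $C$ and $\PG(1,F)$ are two \emph{distinct} $F$-chains, Lemma~\ref{l:hans1} applies directly and gives
\[
  \#L_T\le\#\bigl(C\cap\PG(1,F)\bigr)\le q^s+1,
\]
where $s$ is the greatest divisor of $t$ lying in $\{1,2,\ldots,t-1\}$, i.e.\ the greatest proper divisor of $t$.

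It then remains only to contradict this bound. Scatteredness of $T$ means $\#L_T=\theta_{t-1}=1+q+\cdots+q^{t-1}$, so I must verify the strict numerical inequality $\theta_{t-1}>q^s+1$. This is where the hypotheses enter: because $s$ is a proper divisor of $t$ we have $1\le s\le t-1$, hence $q^s$ is one of the summands $q,q^2,\dots,q^{t-1}$ of $\theta_{t-1}$, and therefore $\theta_{t-1}-(q^s+1)=\sum_{1\le i\le t-1,\;i\neq s}q^i$; for $t\ge3$ this sum retains at least $t-2\ge1$ strictly positive terms, so $\theta_{t-1}>q^s+1$, contradicting the displayed estimate and proving the claim.

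The argument is short, so there is no deep obstacle; the one point demanding care is the \emph{strictness} of the counting inequality, which is precisely why the hypothesis $t\ge3$ cannot be dropped (for $t=2$ one has $s=1$ and $\theta_{1}=q+1=q^s+1$, so the estimate would be met with equality and the contradiction would evaporate). Conceptually the crux is the universal inclusion $L_T\subseteq\PG(1,F)$, which converts ``$L_T$ lies in a second chain'' into ``$L_T$ lies in the intersection of two distinct chains,'' after which Lemma~\ref{l:hans1} does all the work.
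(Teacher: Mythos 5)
Your proof is correct and follows exactly the paper's route: note that $L_T\subseteq\PG(1,F)$ by definition, so containment in a second $F$-chain would force $L_T$ into the intersection of two distinct chains, and then the bound $\#L_T=\theta_{t-1}>q^s+1$ from Lemma~\ref{l:hans1} (the paper uses the coarser estimate $\theta_{t-1}>q^{t-1}+1$) gives the contradiction for $t\ge3$. Your remark about why $t=2$ fails matches the paper's subsequent remark on reguli and regular spreads.
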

\begin{proof}
From $t\geq 3$ follows $\#L_T=\theta_{t-1}> q^{t-1}+ 1$. The assertion is now
immediate from Lemma~\ref{l:hans1}.
\end{proof}

\begin{remark} For $t=2$ the set $L_T^\Psi$ is a regulus in $\PG(3,q)$ and
the $F $-chains are precisely the regular spreads in $\PG(3,q)$. Choose a point
that is off the hyperbolic quadric $\cH$ that carries $L_T^\Psi$. Then there are
as many regular spreads through $L_T^\Psi$ as there are external lines to $\cH$
through the chosen point. A straightforward counting shows that the number of
these lines is $\frac{1}{2}(q^2-q)$. Thus, unless $q=2$, there is more than one
$F $-chain through $L_T$.
\end{remark}

\begin{sloppypar}
\begin{theorem}\label{t:preq}
  Let $T$ and $U $ be points of $\PG(1,E)$, with $T$ a scattered point. Then
  the following assertions are equivalent:
  \begin{enumerate}[{\emph(}i{\emph)}]
  \item A collineation ${\kappa}\in\PGaL_{2}(q^t)$ with companion
      automorphism $\eta\in\gal$ exists, such that $\cB(T)^{{\kappa}}=\cB(U
      )$;
  \item $L_T$ and $L_{U }$ are projectively equivalent in $\PG(1,E)$.
  \end{enumerate}
\end{theorem}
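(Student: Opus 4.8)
The plan is to translate both conditions into statements about projectivities of $\PG(1,E)$, using the correspondence from Prop.~\ref{p:hans1}: the collineations $\kappa\in\PGaL_2(q^t)$ with companion automorphism in $\gal$ are exactly the maps $\iota^{-1}\pi\iota$, where $\pi$ ranges over the projectivities of $\PG(1,E)$ that stabilise the $F$-chain $\PG(1,F)$. The bookkeeping that glues the two worlds together is $L_T=\cB(T)^\iota$ and $L_{U}=\cB(U)^\iota$, so that $L_T$ and $L_{U}$ both lie inside $\PG(1,F)$. Read this way, $(i)$ asserts that some projectivity of $\PG(1,E)$ \emph{stabilising} $\PG(1,F)$ carries $L_T$ onto $L_{U}$, while $(ii)$ asserts only that \emph{some} projectivity does; the theorem says these coincide.

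For $(i)\Rightarrow(ii)$ I would extend the collineation $\iota^{-1}\kappa\iota$ of $\PG(1,F)$ to a projectivity $\pi$ of $\PG(1,E)$, which is possible by the first part of Prop.~\ref{p:hans1}. Since $L_T\subseteq\PG(1,F)$ and $\pi$ agrees with $\iota^{-1}\kappa\iota$ there, a short computation using $L_T=\cB(T)^\iota$ gives $L_T^\pi=(\cB(T)^\kappa)^\iota=\cB(U)^\iota=L_{U}$, so $\pi$ witnesses the projective equivalence of $L_T$ and $L_{U}$.

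The substance is $(ii)\Rightarrow(i)$. Let $\pi$ be a projectivity of $\PG(1,E)$ with $L_T^\pi=L_{U}$. Since $\pi$ is a bijection, $\#L_{U}=\#L_T=\theta_{t-1}$, so $U$ is scattered as well. Now $L_{U}$ lies in $\PG(1,F)$ by Definition~\ref{d:L(T)}, while simultaneously $L_{U}=L_T^\pi$ lies in $\PG(1,F)^\pi$, which is again an $F$-chain. Here I would invoke Prop.~\ref{p:hans2}: for a scattered point with $t\ge3$, the \emph{only} $F$-chain containing $L_{U}$ is $\PG(1,F)$, forcing $\PG(1,F)^\pi=\PG(1,F)$, i.e.\ $\pi$ stabilises $\PG(1,F)$. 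The converse part of Prop.~\ref{p:hans1} then shows $\iota^{-1}\pi\iota$ is a collineation $\kappa\in\PGaL_2(q^t)$ with companion automorphism $\eta\in\gal$, and the same bookkeeping as before yields $\cB(T)^\kappa=L_{U}^{\iota^{-1}}=\cB(U)$.

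The main obstacle is exactly the step that forces $\pi$ to stabilise $\PG(1,F)$, and this is where both scatteredness and the range $t\ge3$ are indispensable, entering through the uniqueness in Prop.~\ref{p:hans2}. For $t=2$ this uniqueness genuinely fails, as the Remark following Prop.~\ref{p:hans2} records: a set of size $\theta_1=q+1$ typically lies on many regular spreads (equivalently $F$-chains), so $\pi$ need not preserve $\PG(1,F)$ and a separate, elementary argument would be required. Everything else reduces to the correspondence of Prop.~\ref{p:hans1} together with the identities $L_T=\cB(T)^\iota$, $L_{U}=\cB(U)^\iota$, and is routine.
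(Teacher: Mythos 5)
Your argument coincides with the paper's own proof in both directions: $(i)\Rightarrow(ii)$ via the extension part of Prop.~\ref{p:hans1}, and $(ii)\Rightarrow(i)$ by using $\#L_U=\#L_T=\theta_{t-1}$ to see that $U$ is scattered, then Prop.~\ref{p:hans2} to force $\PG(1,F)^\pi=\PG(1,F)$, then the converse part of Prop.~\ref{p:hans1}. That is exactly the paper's Case~1.

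The one genuine shortfall is that the theorem as stated carries no hypothesis $t\ge 3$, and you explicitly leave the case $t=2$ open (``a separate, elementary argument would be required''), so as written your proof does not establish the full statement. The paper closes this case in one line: for $t=2$ a scattered linear set of rank $2$ is a Baer subline of $\PG(1,q^2)$, and any two Baer sublines are projectively equivalent under $\PGL_2(q^2)$, so $(i)$ holds outright (with trivial companion automorphism) whenever $T$ and $U$ are both scattered --- which they are under $(ii)$, by the same cardinality count you already made. Adding that observation would complete your proof; everything else is correct and follows the paper's route.
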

\end{sloppypar}
\begin{proof}
If $\kappa$ is given as in $(i)$ then, by the first part of
Prop.~\ref{p:hans1}, there is a projectivity of $\PG(1,E)$ that takes
$\cB(T)^\iota=L_T$ to $\cB(U )^{\iota}=L_{U }$.

Conversely, let $\pi$ be a projectivity of $\PG(1,E)$ that takes $L_T$ to
$L_{U }$. There are two cases:

\emph{Case 1: $t\geq 3$}. From $\# L_T = \# L_{U }$ the point $U $ is
scattered. By Prop.~\ref{p:hans2}, each of the sets $L_{T}$ and $L_{U }$ is
contained in no $F $-chain other than $\PG(1,F )$. Hence $\PG(1,F )$ is
invariant under $\pi$ so that the second part of Prop.~\ref{p:hans1} shows the
existence of a collineation of $\PG(1,q^t)$ with the required properties.

\emph{Case 2: $t=2$}. The sets $\cB(T)$ and $\cB(U )$ are two linear sets of
rank $2$ and cardinality $q+1$, i.e.\ two Baer sublines of $\PG(1,q^2)$. These
are well known to be projectively equivalent.
\end{proof}

\begin{proposition}
  If $g\neq0$ and $t,q>3$, the sets $L_{T_0}$ and $L_{T_1}$, where $T_0$ and
  $T_1$ are defined in Examples~\emph{\ref{T_0}} and \emph{\ref{T_1}}, are not
  projectively equivalent in $\PG(1,E)$.
\end{proposition}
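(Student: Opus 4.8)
The plan is to argue by contradiction, using Theorem~\ref{t:preq} to convert the assumed equivalence in $\PG(1,E)$ into an equivalence of the underlying linear sets in $\PG(1,q^t)$, and then to clash this with the fact that $\cB(T_1)$ is not of pseudoregulus type. First I would suppose that $L_{T_0}$ and $L_{T_1}$ are projectively equivalent in $\PG(1,E)$. Since $T_0$ is a scattered point by Example~\ref{T_0}, Theorem~\ref{t:preq} applies with $T=T_0$ and $U=T_1$: assertion $(ii)$ is precisely our assumption, so $(i)$ provides a collineation $\kappa\in\PGaL_2(q^t)$, whose companion automorphism lies in $\gal$, with $\cB(T_0)^{\kappa}=\cB(T_1)$.

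Next I would exploit that being of pseudoregulus type is a projective property, hence preserved by every collineation in $\PGaL_2(q^t)$; indeed by \cite{DoDu2014} and \cite{LuMaPoTr2014} the linear sets of pseudoregulus type form a single class under projective equivalence, so the image of such a set under any $\kappa\in\PGaL_2(q^t)$ is again of pseudoregulus type. As $\cB(T_0)$ is of pseudoregulus type (Example~\ref{T_0}), it would follow that $\cB(T_1)=\cB(T_0)^{\kappa}$ is of pseudoregulus type as well.

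The contradiction, and the main obstacle, is then to show that for $g\neq0$, $g^{\theta_{t-1}}\neq1$ and $t,q>3$ the scattered linear set $\cB(T_1)$, determined by the binomial $\Fq$-linear map $x\mapsto gx^{q}+x^{q^{t-1}}$ (Example~\ref{T_1}, \cite{LuPo2001}), is \emph{not} of pseudoregulus type. This is exactly the equivalence problem for scattered $\Fq$-linear maps, namely distinguishing a genuine binomial from a scalar multiple of a power of Frobenius $x\mapsto x^{q^i}$ with $\gcd(i,t)=1$; I would settle it by invoking the classification of Lunardon--Polverino linear sets of pseudoregulus type in \cite{CsZa2016}. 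The stated hypotheses are what eliminate the degenerate situations that this step must avoid: for $t=2$ the map collapses to $(g+1)x^{q}$ and for $t=3$ the binomial can itself become of pseudoregulus type, while the condition $q>3$ rules out the remaining small-field exceptions. Once $\cB(T_1)$ is established not to be of pseudoregulus type, the chain above yields the required contradiction, proving that $L_{T_0}$ and $L_{T_1}$ are not projectively equivalent in $\PG(1,E)$.
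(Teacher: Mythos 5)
Your proposal is correct and follows essentially the same route as the paper: reduce the $\PG(1,E)$-equivalence to a collineation of $\PG(1,q^t)$ via Theorem~\ref{t:preq}, use that the pseudoregulus property is invariant under $\PGaL_2(q^t)$, and then invoke an external non-equivalence result for $\cB(T_0)$ and $\cB(T_1)$. The only (immaterial) difference is the source cited for that last fact: the paper quotes \cite[Example~4.6]{LuMaPoTr2014} directly for the non-equivalence of the two linear sets, while you appeal to the classification in \cite{CsZa2016}.
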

\begin{proof}
  The sets $\cB(T_0)$ and $\cB(T_1)$ are not projectively equivalent
  \cite[Example 4.6]{LuMaPoTr2014}. Since any linear set in the
  $\PGaL_{2}(q^t)$-orbit of a linear set of pseudoregulus type is again of
  pseudoregulus type, $\cB(T_0)$ and $\cB(T_1)$ also are not equivalent up to
  collineations. Then the assertion follows from Thm.~\ref{t:preq}.
\end{proof}

\section{Characterization of the linear sets of pseudo\-regulus type}

\begin{proposition}\label{p:zero-converse}
  \emph{\cite{LaShZa2015}} Let $T\in\PG(1,E )$ be such that $L_T$ is a set of
  pseudoregulus type. Then there is a $\varphi\in\PGL_2(E)$ such that
  $L_T^\varphi=L_T'$.
\end{proposition}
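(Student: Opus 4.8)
The plan is to reduce to the standard pseudoregulus point $T_0=E(\one,\tau)$ of Example~\ref{T_0} and then exhibit an explicit $\varphi_0\in\PGL_2(E)$. Since $L_T$ is of pseudoregulus type, the characterization recorded after the definition of pseudoregulus type lets me write $L_T=L_{T_0}^{\pi}$ for a generator $\tau$ of $\gal$ and a projectivity $\pi$ of $\PG(1,F)$. By the first part of Prop.~\ref{p:hans1} this $\pi$ extends to a $\hat\pi\in\PGL_2(E)$ stabilizing $\PG(1,F)$ as a set, and by Prop.~\ref{p:varie_1} both families transform equivariantly, $L_{T_0^{\hat\pi}}=(L_{T_0})^{\hat\pi}$ and $L'_{T_0^{\hat\pi}}=(L'_{T_0})^{\hat\pi}$. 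Hence, once I produce $\varphi_0$ with $L_{T_0}^{\varphi_0}=L'_{T_0}$, the conjugate $\varphi:=\hat\pi^{-1}\varphi_0\hat\pi$ satisfies $L_{U}^{\varphi}=L'_{U}$ for $U:=T_0^{\hat\pi}$, so the whole problem is concentrated in the model point $T_0$.

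The heart of the argument is the explicit description of both families for $T_0$. Using Prop.~\ref{p:varie}$(i)$ with $\beta=\tau$ gives $L_{T_0}=\{E(\one,\rho_c)\mid c\in\cN\}$, where $\cN=\{u^{\tau-1}\mid u\in\Fqt^*\}$ is the kernel of $\norm$, i.e.\ the set of norm-one elements (Hilbert~90). On the other side, normalizing $T_0h=E(\rho_h,\tau\rho_h)$ by the invertible left factor $\rho_h^{-1}$ yields $T_0h=E(\one,\rho_h^{-1}\tau\rho_h)$, and a short computation identifies $\rho_h^{-1}\tau\rho_h$ with the map $x\mapsto h^{1-\tau}x^\tau$; as $h$ runs through $\Fqt^*$ the coefficient $h^{1-\tau}$ again runs through $\cN$, so $L'_{T_0}=\{E(\one,\delta_d)\mid d\in\cN\}$ with $\delta_d\colon x\mapsto d\,x^\tau$. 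The natural candidate is then $\varphi_0=\diag(\one,\tau)\in\PGL_2(E)$ (invertible, since $\tau\in E^*$), which sends $E(\one,\rho_c)$ to $E(\one,\rho_c\tau)$ where $\rho_c\tau\colon x\mapsto c^\tau x^\tau=\delta_{c^\tau}$. As $\tau$ permutes $\cN$, the image $\{E(\one,\delta_{c^\tau})\mid c\in\cN\}$ is exactly $L'_{T_0}$, whence $L_{T_0}^{\varphi_0}=L'_{T_0}$.

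Two points require care. First, because endomorphisms act on the right, the compositions $\rho_h^{-1}\tau\rho_h$ and $\rho_c\tau$ must be read left-to-right, so the exponent bookkeeping and the two applications of Hilbert~90 (identifying $\{u^{\tau-1}\}$ and $\{h^{1-\tau}\}$ with the norm-one subgroup) have to be carried out carefully; this is routine, but it is exactly where an exponent slip would break the set-theoretic match. Second, and this I expect to be the real obstacle, $L'_T$ depends on the point $T$ and not merely on the set $L_T$, whereas the reduction above only guarantees $L_U=L_T$ for $U=T_0^{\hat\pi}$, not $U=T$. I would close this gap with the product-space description (Prop.~\ref{p:PGxPG-embed} and \cite{LaShZa2015}): the points $T''$ with $L_{T''}=L_T$ are precisely the members of the second ruling $(L'_T)^\Psi$, and for any such $T''=T_0h_0$ one has $L'_{T''}=\{T_0(h_0h)\mid h\in\Fqt^*\}=L'_{T_0}$; thus $L'_T=L'_U$, and the conclusion $L_U^{\varphi}=L'_U$ transfers verbatim to $L_T^{\varphi}=L'_T$.
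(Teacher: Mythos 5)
Your second and third paragraphs reproduce, in essence, the paper's own proof: after normalizing to $T_0=E(\one,\tau)$ the projectivity is $\diag(\one,\tau)$, and your Hilbert~90 bookkeeping for $L_{T_0}=\{E(\one,\rho_c)\mid c\in\ker\norm\}$ and $L'_{T_0}=\{E(\one,\mu)\mid \mu:x\mapsto dx^\tau,\ d\in\ker\norm\}$ is correct. The genuine problem lies in the step you yourself single out as the real obstacle, and your fix for it rests on a false claim. It is \emph{not} true that the points $T''$ with $L_{T''}=L_T$ are precisely the members of $L'_T$: Prop.~\ref{p:rekonstr-alt} gives uniqueness of the second family of maximal subspaces \emph{once one of its members is prescribed}; it does not say that every $(t-1)$-subspace of $\cQ$ transversal to $L_T^\Psi$ lies in $(L'_T)^\Psi$. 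Indeed $\cQ$ carries $t$ partitions $\cS_0,\dots,\cS_{t-1}$ into $(t-1)$-subspaces \cite[Thm.~6]{LaShZa2015}, and $\cS_i$ is transversal to $\cS_0=L_{T_0}^\Psi$ whenever $\gcd(i,t)=1$. Concretely, for $t$ odd take $T''=E(\one,\tau^2)$: then $L_{T''}=L_{T_0}$, because $\{u^{q^2-1}\mid u\in\Fqt^*\}$ and $\{u^{q-1}\mid u\in\Fqt^*\}$ are subgroups of the cyclic group $\Fqt^*$ of the same order $\theta_{t-1}$, hence equal; but $T''\notin L'_{T_0}$, since $\tau^2$ is not of the form $\rho_{h^{-1}}\tau\rho_h:x\mapsto h^{1-\tau}x^\tau$ (distinct $q$-polynomials of degree less than $q^t$ give distinct endomorphisms). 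For such a $T''$ your conjugated $\varphi$ sends $L_{T''}$ onto $L'_{T_0}\neq L'_{T''}$, so the argument does not prove the proposition for every $T$ admitted by the hypothesis.

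In fairness, the paper's proof compresses this reduction into the single sentence ``it may be assumed that $T=E(\one,\tau)$'', so your instinct that the dependence of $L'_T$ on the point $T$ (rather than on the set $L_T$) needs attention is sound; you have just replaced an unproved step by an incorrect lemma. The difficulty is not fatal to the statement: for $T''=E(\one,\rho_g\tau^i)$ with $\gcd(i,t)=1$ the very same computation with $\diag(\one,\tau^i)$ produces a projectivity mapping $L_{T''}$ onto $L'_{T''}$. A complete repair therefore requires a description of \emph{all} points $T$ with $L_T$ of pseudoregulus type, up to projectivities stabilizing $\PG(1,F)$ --- equivalently, of all $(t-1)$-subspaces of $\cQ$ transversal to $L_{T_0}^\Psi$ --- after which the explicit verification can be run in each case; the set equality you invoked does not deliver this for free.
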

\begin{proof}
  Since up to projectivities in $\PG(1,q^t)$ there is a unique linear set of
  pseudoregulus type \cite{DoDu2014,LuMaPoTr2014}, it may be assumed that $T=
  E(\one,\tau)$ with $\tau$ a generator of $\gal$. A projectivity $\varphi$
  satisfying the thesis is given by the matrix $\diag(\one,\tau)\in\GL_2(E)$.
  As a matter of fact, for any $u\in\Fqt^*$,
  \[
     E(\rho_{1/u^\tau},\tau\rho_{1/u^\tau})=E(\one,\rho_{u^{\tau}}\tau\rho_{1/u^\tau})=
     E(\one,\tau\rho_{u^{\tau^2}/u^\tau})=E(\one,\rho_{u^\tau/u})^\varphi,
  \]
  and by Prop.~\ref{p:varie} this implies that $\varphi$ maps $L_T$ onto
  $L'_T$.
\end{proof}
The goal of this section is to prove the converse of
Prop.~\ref{p:zero-converse}.

\begin{proposition}\label{p:caratt-massimali} Let
$\varepsilon:\Pi_1\times\Pi_2\rightarrow\Pi_3$ be a projective embedding, where
$\Pi_j$ is a projective space of finite dimension $d_j\geq 1$ for $j=1,2,3$.
Let $\cU_1$ be the set of all $d_1$-subspaces of type $(\Pi_1\times
Q)^\varepsilon$ for $Q$ a point in $\Pi_2$, and $\cU_2$ the set of all
$d_2$-subspaces of type $(P\times \Pi_2)^\varepsilon$ for $P$ a point in
$\Pi_1$. Fix any point $S\in\Pi_2$ and consider the subspace $(\Pi_1\times
S)^\varepsilon\in\cU_1$. Under these assumptions the following assertions hold:
    \begin{enumerate}[1.]
  \item A line $y$ of\/ $\Pi_3$ is contained in some subspace belonging to
      $\cU_1$ if, and only if, there is a $d_2$-regulus $\cR_y$ in $\Pi_3$
      subject to the following three conditions:
  \begin{enumerate}[(a)]
    \item $\cR_y$ has a transversal line in $(\Pi_1\times
        S)^\varepsilon$.
    \item $\cR_y$ is contained in $\cU_2$.
    \item $y$ is a transversal line of $\cR_y$.
  \end{enumerate}
\item $A$ subspace $V$ of\/ $\Pi_3$ belongs to $\cU_1$ if, and only if, $V$
    has dimension $d_1$ and for any line $y$ of $V$ there is a
    $d_2$-regulus $\cR_y$ subject to the conditions (a), (b), and (c) from
    above.
\end{enumerate}
\end{proposition}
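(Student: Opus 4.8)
The plan is to exploit the fact that the image $\Sigma:=(\Pi_1\times\Pi_2)^\varepsilon$ is a Segre variety carrying the two rulings $\cU_1$ and $\cU_2$, so that the whole statement becomes an assertion about the incidence between lines, reguli and the maximal subspaces of $\Sigma$. First I would record the standard facts I intend to use: two members of the same ruling are mutually skew; a member of $\cU_1$ and a member of $\cU_2$ meet in exactly one point; through each point of $\Sigma$ there passes exactly one member of each ruling; and, crucially, if $\ell$ is a line of $\Pi_1$ then $(\ell\times\Pi_2)^\varepsilon$ is a sub-Segre isomorphic to $\PG(1,q)\times\PG(d_2,q)$ spanning a $(2d_2+1)$-dimensional space, whose $\cU_2$-ruling $\{(P\times\Pi_2)^\varepsilon\mid P\in\ell\}$ is a $d_2$-regulus and whose $\cU_1$-ruling $\{(\ell\times Q)^\varepsilon\mid Q\in\Pi_2\}$ is precisely the set of transversal lines of that regulus. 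This last identification (transversals $=$ opposite ruling) is the geometric heart of the argument and I expect it to carry essentially all the weight.

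Then I would prove Statement 1. For the forward implication, if $y\subseteq(\Pi_1\times Q)^\varepsilon$ then $y=(\ell\times Q)^\varepsilon$ for a line $\ell$ of $\Pi_1$, and I take $\cR_y=\{(P\times\Pi_2)^\varepsilon\mid P\in\ell\}$; condition (b) holds by construction, while (a) and (c) are witnessed by the transversals $(\ell\times S)^\varepsilon$ and $y=(\ell\times Q)^\varepsilon$ respectively. For the converse, given $\cR_y$ subject to (a)--(c), I use the transversal $z\subseteq(\Pi_1\times S)^\varepsilon$ guaranteed by (a): since $\varepsilon$ restricts to an isomorphism on the slice $\Pi_1\times\{S\}$, one has $z=(\ell\times S)^\varepsilon$ for a line $\ell$ of $\Pi_1$; each of the $q+1$ points of $z$ lies on a unique member of $\cU_2$, which forces (using (b), the disjointness of distinct $\cU_2$-members, and the fact that a $d_2$-regulus has exactly $q+1$ members) the identification $\cR_y=\{(P\times\Pi_2)^\varepsilon\mid P\in\ell\}$. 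By the recalled fact the transversal lines of $\cR_y$ are exactly the lines $(\ell\times Q)^\varepsilon$; since $y$ is one such transversal by (c), I conclude $y=(\ell\times Q)^\varepsilon\subseteq(\Pi_1\times Q)^\varepsilon\in\cU_1$.

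Finally I would handle Statement 2. The ``only if'' part is immediate: a member $V=(\Pi_1\times Q)^\varepsilon$ has dimension $d_1$, and every line of it lies in $V\in\cU_1$, so Statement 1 supplies the required reguli. For the ``if'' part I assume $\dim V=d_1$ and that (by Statement 1) every line of $V$ lies in some member of $\cU_1$; in particular every point of $V$ lies on $\Sigma$. I fix a point $x\in V$: every line of $V$ through $x$ is contained in some $(\Pi_1\times Q)^\varepsilon$ passing through $x$, so by the disjointness of distinct $\cU_1$-members all of these coincide with the unique member $(\Pi_1\times Q_0)^\varepsilon$ through $x$. Hence every point of $V$, being joined to $x$ by a line of $V$, lies in $(\Pi_1\times Q_0)^\varepsilon$, so $V\subseteq(\Pi_1\times Q_0)^\varepsilon$; equality of the dimensions $d_1$ then yields $V=(\Pi_1\times Q_0)^\varepsilon\in\cU_1$.

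The main obstacle is the clean bookkeeping of the regulus facts rather than any deep difficulty: I must pin down that a $d_2$-regulus is the $\cU_2$-ruling of the sub-Segre generated by three pairwise skew $d_2$-subspaces of a $(2d_2+1)$-space, that such a regulus has exactly $q+1$ members, and above all that its transversal lines are exactly the members of the opposite $\cU_1$-ruling. Once this dictionary is fixed, both directions of both statements reduce to the elementary incidence properties of the two rulings of a Segre variety, and the remaining arguments (matching $\cR_y$ with a line $\ell$ of $\Pi_1$, and the connectedness argument forcing a single second coordinate $Q_0$ on $V$) are routine.
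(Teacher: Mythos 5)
Your proposal is correct and follows essentially the same route as the paper: both rest on the same three recorded facts (uniqueness of the ruling member through a point, the $\cU_2$-ruling over a line $\ell$ of $\Pi_1$ being a $d_2$-regulus, and its transversals being exactly the lines $(\ell\times Q)^\varepsilon$), and both converses are argued the same way, with your identification of $\cR_y$ via the $q+1$ points of the transversal in $(\Pi_1\times S)^\varepsilon$ just spelling out what the paper states more tersely. No gaps.
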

\begin{proof}
Our reasoning will be based on the following three facts: $(i)$~due to the
injectivity of $\varepsilon$, each point in the image of $\varepsilon$ is
incident with a unique subspace from $\cU_1$ and a unique subspace from
$\cU_2$; $(ii)$~for any line $\ell\subset\Pi_1$ the set
\begin{equation}\label{e:regulus}
    \left\{ (P\times\Pi_2)^\varepsilon \mid P\in\ell \right\}\subset\cU_2
\end{equation}
is a $d_2$-regulus; $(iii)$~the transversal lines of this regulus are precisely
the lines of the form $(\ell\times Q)^\varepsilon$ with $Q$ varying in $\Pi_2$.

\emph{Ad 1.} Let $y$ be a line such that $y\subset (\Pi_1\times
R)^\varepsilon\in\cU_1$, where $R\in\Pi_2$. Since the restriction of
$\varepsilon$ to $\Pi_1\times R$ is a collineation onto the subspace
$(\Pi_1\times R)^\varepsilon$, there is a unique line $\ell_y\subset\Pi_1$ such
that $y=(\ell_y\times R)^\varepsilon$. By \eqref{e:regulus}, the $d_2$-regulus
$\cR_y:=\{ (P\times\Pi_2)^\varepsilon \mid P\in\ell_y \}$ satisfies \emph{(b)}.
Furthermore, both $y$ and $(\ell_y\times S)^\varepsilon\subset(\Pi_1\times
S)^\varepsilon $ are transversal lines of $\cR_y$, whence conditions \emph{(a)}
and \emph{(c)} are satisfied too.

Conversely, assume that for a line $y\subset\Pi_3$ there is a regulus $\cR_y$
satisfying the three conditions from above. By \emph{(a)}, there is a line
$\ell_y\subset\Pi_1$ for which $(\ell_y\times S)^\varepsilon$ is a transversal
line of $\cR_y$. Now \emph{(b)} implies that $\cR_y$ can be written as in
\eqref{e:regulus} with $\ell$ to be replaced with $\ell_y$. Thus \emph{(c)}
shows that there is a point $R\in\Pi_2$ such that $y=(\ell_y\times
R)^\varepsilon$. This in turn gives $y\subset \Pi_1\times R \in\cU_1$.

\emph{Ad 2.} Let $V\in\cU_1$. The dimension of $V$ obviously is $d_1$. Given
any line $y\subset V$ there is a regulus $\cR_y$ with the required properties
by the first part of the proposition.

For a proof of the converse, we fix a point $Y\in V$ and consider an arbitrary
line $y\subset V$ through $Y$. By the first part of the proposition, there is
at least one point $R\in\Pi_2$ such that $y\subset(\Pi_1\times
R)^\varepsilon\in\cU_1$. This implies $Y=(X,R)^\varepsilon$ for some point
$X\in\Pi_1$. The point $R$ does not depend on the choice of the line $y$ on
$Y$. Consequently, $V\subset (\Pi_1\times R)^\varepsilon$ and, due to $d_1$
being the dimension of $V$, these two subspaces are identical.
\end{proof}

Clearly, analogous statements hold, where the roles of $\cU_1$ and $\cU_2$ are
interchanged. We will refer to $\cU_1$ and $\cU_2$ in
Prop.~\ref{p:caratt-massimali} as to the \emph{maximal subspaces of the
embedded product space $(\Pi_1\times\Pi_2)^\varepsilon$}. Notice that such
subspaces are defined with respect to the embedding, since
$(\Pi_1\times\Pi_2)^\varepsilon$ may contain further maximal subspaces.
Prop.~\ref{p:caratt-massimali} implies:
\begin{proposition}\label{p:rekonstr-alt}
  Let $\cU_1$ and $V$ be a collection of $d_1$-subspaces and a $d_2$-subspace
  in a projective space $\Pi$, respectively, where $d_1$ and $d_2$ are positive
  integers. There exists at most one collection $\cU_2$ of $d_2$-subspaces of\/
  $\Pi$ with $V\in\cU_2$, and such that $\cU_1$ and $\cU_2$ are the collections
  of maximal subspaces of an embedded product space.
\end{proposition}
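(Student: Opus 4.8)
The plan is to derive the statement directly from Proposition~\ref{p:caratt-massimali}, read in the form where the roles of $\cU_1$ and $\cU_2$ are interchanged. The key realization is that part~2 of that proposition characterizes membership in one family of maximal subspaces \emph{intrinsically}: the test for a subspace $W$ to belong to the family uses only the ambient space, the \emph{opposite} family, and a \emph{single} distinguished member of the family being tested. In the present situation the opposite family $\cU_1$ and one member $V$ of the sought family are both prescribed, so I expect this to determine $\cU_2$ uniquely.

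Concretely, I would argue as follows. Suppose $\cU_2$ and $\cU_2'$ are two collections of $d_2$-subspaces of $\Pi$, each containing $V$, with both $(\cU_1,\cU_2)$ and $(\cU_1,\cU_2')$ arising as the families of maximal subspaces of an embedded product space. Applying the interchanged version of part~2 of Proposition~\ref{p:caratt-massimali}, with $V$ in the role of the distinguished subspace $(\Pi_1\times S)^\varepsilon$, I obtain that a $d_2$-subspace $W\subset\Pi$ lies in $\cU_2$ exactly when, for every line $y\subset W$, there is a $d_1$-regulus $\cR_y$ possessing a transversal line inside $V$, contained in $\cU_1$, and having $y$ among its transversals. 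The very same criterion governs membership in $\cU_2'$.

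The decisive point I would stress is that this criterion mentions only $\Pi$, the prescribed family $\cU_1$, and the single subspace $V$; it refers neither to the embedding $\varepsilon$ nor to the second family itself, since the notion of a regulus and of its transversals is purely projective in $\Pi$. Hence both $\cU_2$ and $\cU_2'$ must equal the set of all $d_2$-subspaces of $\Pi$ passing this test, which yields $\cU_2=\cU_2'$. Should no admissible second family exist at all, the ``at most one'' claim holds vacuously.

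The main thing to watch, rather than a genuine obstacle, is the bookkeeping of the interchange. I would verify carefully that $V$, being an element of the second family, plays precisely the part of $(\Pi_1\times S)^\varepsilon$ in the original formulation, and that each of the three regulus conditions \emph{(a)}, \emph{(b)}, \emph{(c)} translates into a requirement phrased solely in terms of $\cU_1$ and $V$. Once this is confirmed, the uniqueness follows with no further computation, the entire content residing in Proposition~\ref{p:caratt-massimali}.
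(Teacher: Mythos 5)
Your proposal is correct and is exactly the paper's intended argument: the paper derives Proposition~\ref{p:rekonstr-alt} directly from the interchanged form of part~2 of Proposition~\ref{p:caratt-massimali}, with $V$ serving as the distinguished maximal subspace, precisely because the membership test refers only to $\Pi$, $\cU_1$, and $V$. Your write-up merely makes explicit the bookkeeping that the paper leaves implicit.
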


\begin{remark}
  By Prop.~\ref{p:caratt-massimali}, a point $P\in\PG(1,E)$ is in $L'_T$ if,
  and only if, no line of $P^\Psi$ is irregular as defined in \cite{LaVdV2015}
  with respect to the scattered subspace $T^\Psi$.
\end{remark}

\begin{theorem}\label{t:pseudoreg}
Let $T=E(\one,\beta)$ be a scattered point, where $\beta\in E^*$, let $t\geq 3$
and suppose that there exists a projectivity $\varphi\in\PGL_2(E)$ such that
$L_T^\varphi=L'_T$. Then $L_T$ is of pseudoregulus type.
\end{theorem}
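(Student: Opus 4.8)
The plan is to reduce the statement to the purely one-dimensional assertion that $L_T$ and $L_{E(\one,\tau)}$ are projectively equivalent in $\PG(1,E)$ for some generator $\tau$ of $\gal$. Since the class of linear sets of pseudoregulus type is invariant under all of $\PGaL_2(q^t)$ and $\cB(E(\one,\tau))$ is of this type (Example~\ref{T_0}), it suffices to produce a collineation of $\PG(1,q^t)$ carrying $\cB(E(\one,\tau))$ onto $\cB(T)$; by Thm~\ref{t:preq} such a collineation exists precisely when $L_T$ and $L_{E(\one,\tau)}$ are projectively equivalent in $\PG(1,E)$. So the whole problem becomes that of recognizing $L_T$, as a subset of the $F$-chain $\PG(1,F)\cong\PG(1,q^t)$, as the orbit pattern characteristic of a pseudoregulus.

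The starting point is the hypothesis $L_T^\varphi=L_T'$ combined with uniqueness of the ambient $F$-chain. Because $t\ge 3$ and $T$ is scattered, Prop~\ref{p:hans2} shows that $\PG(1,F)$ is the only $F$-chain containing $L_T$; applying the bijection induced by $\varphi$ on the set of $F$-chains, the image $L_T'$ is contained in the unique $F$-chain $C:=\PG(1,F)^\varphi$. Now let $G_F=\{\diag(\rho_d,\rho_d)\mid d\in\Fqt^*\}$ be the group of the $\theta_{t-1}$ projectivities fixing $\PG(1,F)$ pointwise (Prop~\ref{p:hans3}). Since $L_T'=\{Th\}$ is a single $G_F$-orbit, every $g\in G_F$ sends $C$ to an $F$-chain that again contains $L_T'$, whence $C^g=C$ by the uniqueness just established. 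Thus $G_F$ stabilizes $C$ setwise, acts regularly on $L_T'$ (the action $Th\mapsto T(hd)$ is free modulo $\Fq^*$), and fixes pointwise the at most $q^s+1$ points of $C\cap\PG(1,F)$ (Lemma~\ref{l:hans1}).

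Transporting this picture back by $\varphi^{-1}$, the conjugate $H:=\varphi G_F\varphi^{-1}\le\PGL_2(E)$ is a cyclic group of order $\theta_{t-1}$ that stabilizes $\PG(1,F)$, acts regularly on $L_T$, and fixes pointwise the set $\PG(1,F)\cap\PG(1,F)^{\varphi^{-1}}$. By the second part of Prop~\ref{p:hans1}, $H$ restricts to a group of collineations of $\PG(1,q^t)$ with companion automorphisms in $\gal$. I would then classify such an $H$: a cyclic group of order $\theta_{t-1}=(q^t-1)/(q-1)$ inside $\PGaL_2(q^t)$ that admits a regular orbit and at least one fixed point must be a split torus, so that in suitable coordinates its fixed points are $\la(1,0)\ra$ and $\la(0,1)\ra$ and its regular orbits are exactly the cosets of the norm-one subgroup $\ker\norm$, i.e.\ the fibres of the norm map. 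Consequently $L_T$ is, up to a projectivity of $\PG(1,q^t)$, the set $\{\la(1,v)\ra\mid \norm(v)=1\}$; the same description holds for $L_{E(\one,\tau)}$ (there $u^\tau/u$ runs through $\ker\norm$), and these two orbits are interchanged by an element of the overlying torus. This exhibits the required projective equivalence, and Thm~\ref{t:preq} then yields the conclusion.

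The delicate point is the classification step. One must control the companion-automorphism homomorphism $H\to\gal$ and exclude the configurations---a non-split (anisotropic) cyclic group with no fixed point, or a regular orbit that is not a full norm-fibre---that would not correspond to a pseudoregulus. Here the interplay between $\#(C\cap\PG(1,F))\le q^s+1$, the order $\theta_{t-1}$, and the requirement that $H$ act regularly on the $\theta_{t-1}$ points of $L_T$ has to be exploited to force the split-torus/norm-fibre structure; the hypotheses $t\ge 3$ and $\beta\in E^*$ (so that $E(\one,0)\notin L_T$, cf.\ Prop~\ref{p:varie}) are exactly what make the fixed-point and orbit counts rigid. Once $H$ is pinned down as a split torus whose regular orbit is $L_T$, the equivalence with the pseudoregulus orbit $L_{E(\one,\tau)}$ is immediate and the argument closes.
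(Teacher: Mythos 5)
Your overall strategy (produce a cyclic group of order $\theta_{t-1}$ acting regularly on $L_T$, classify it, and close with Thm.~\ref{t:preq}) matches the paper's, and your route to the group is in one respect slicker: by conjugating the group $G_F$ of Prop.~\ref{p:hans3} with $\varphi$ itself and invoking the uniqueness of the $F$-chain through $L_T$ (Prop.~\ref{p:hans2}), you get that $H=\varphi G_F\varphi^{-1}$ stabilizes $\PG(1,F)$ and acts regularly on $L_T$ directly from $L_T^\varphi=L'_T$. The paper instead uses $\varphi^{-1}G_F\varphi$, and to make that group act regularly on $L_T$ it must first establish $(L'_U)^\varphi=L_T$ for an auxiliary scattered point $U$ via the uniqueness of the maximal subspaces of an embedded product space (Props.~\ref{p:PGxPG-embed}, \ref{p:caratt-massimali}, \ref{p:rekonstr-alt}); your $F$-chain argument bypasses all of that.

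The problem is that your proof stops exactly where the real work begins: the classification of $H$ is asserted, not proved, and the tools you propose for it are not available. Concretely: (a) the fixed point you rely on need not exist. $H$ fixes $\PG(1,F)^{\varphi^{-1}}$ pointwise, and since $T\in L'_T=L_T^\varphi$ while $T\notin\PG(1,F)$, the chain $\PG(1,F)^{\varphi^{-1}}$ is distinct from $\PG(1,F)$; two distinct $F$-chains meet in at most $q^s+1$ points but may be disjoint, so $\PG(1,F)\cap\PG(1,F)^{\varphi^{-1}}$ can be empty. (b) The reduction from $\PGaL_2(q^t)$ to $\PGL_2(q^t)$ is the delicate step you name but do not carry out, and your choice of conjugate removes the lever the paper uses for it: the paper's group $\varphi^{-1}G_F\varphi$ fixes $L'_T$ \emph{pointwise}, which is what allows Step~3 to track the points $\la(uk,u^\beta k)\ra_q$ and force every $k\in\Fqt^*$ to be an eigenvector of the companion automorphism $\eta$, hence $\eta=\one$. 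Your $H$ fixes only $\PG(1,F)^{\varphi^{-1}}$ pointwise, about which nothing is known, and since $\gcd(\theta_{t-1},t)$ can exceed $1$ (e.g.\ $q=4$, $t=3$) the order of $H$ alone does not kill $\eta$. (c) Even inside $\PGL_2(q^t)$, excluding the unipotent case (where the induced group has order $p<\theta_{t-1}$) and the anisotropic torus (where $\theta_{t-1}$ would have to divide $q^t+1$) is the eigenvalue analysis of the paper's Step~4; it does not follow from the counts you list. So the proposal is a plausible outline whose decisive steps are missing.
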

\begin{proof}

We split the proof into four steps.

\emph{Step 1.} First we fix some matrix $(\varphi_{ij})\in\GL_2(E)$ that
describes $\varphi$. Then we choose any point $U\in\PG(1,E)$ such that
$U^\varphi\in L_T$. The point $U^\varphi$ is non-distant to all points of
$L'_T$, whence $U$ is non-distant to all (namely $\theta_{t-1}$) points of
$L_T$. So $U$ is scattered and $L_T=L_U$. Also, there is a $\gamma\in E^*$
satisfying
\begin{equation*}
    U =  E(\one,\gamma).
\end{equation*}
According to \eqref{e:hat-phi}, the matrix $(\varphi_{ij})$ describes also that
projective collineation $\hat\varphi$ of $\PG_q(\Fqt^2)$ whose action on the
Grassmannian $\PG(1,E)^\Psi$ coincides with $\Psi^{-1}\varphi\Psi$. It can be
deduced from Prop.~\ref{p:PGxPG-embed} that for any scattered point
$X\in\PG(1,E )$, $L_X^{\Psi}$ and $(L'_X)^\Psi$ are the two collections of
maximal subspaces of an embedded product space. So, a repeated application of
Prop.~\ref{p:rekonstr-alt} implies:
\begin{itemize}
\item [(a)] $L_T^\Psi$ is the unique collection of maximal subspaces of an
    embedded product space, containing $U^{\varphi\Psi}$, the other
    collection being $(L'_T)^\Psi$.

\item [(b)] $(L'_U)^\Psi$ is the unique collection of maximal subspaces of
    an embedded product space, containing $U^{\Psi}$, the other one being
$L^\Psi_U=L^\Psi_T$.
\end{itemize}
By applying $\hat\varphi$ to (b), one obtains:
\begin{itemize}
\item[(c)] $(L'_U)^{\varphi\Psi}$ is the unique collection of maximal
    subspaces of an embedded product space, containing $U^{\varphi\Psi}$,
    the other one being $L^{\varphi\Psi}_T=(L'_T)^\Psi$.
\end{itemize}
By (a) and (c)
\begin{equation}\label{e:gleich}
    (L'_U)^\varphi = L_T = L_U .
\end{equation}

\emph{Step 2.} Let $H$ be the subgroup $\GL_2(E)$ formed by all matrices
$\diag(\rho_h,\rho_h)$ with $h\in\Fqt^*$. Also, let $\Lambda$ be the associated
group of projectivities of $\PG(1,E)$. Both $H\cong\Fqt^*$ and $\Lambda\cong
{\Fqt^* / \Fq^*}$ are cyclic. Now, for all $h,k\in\Fqt^*$ the equality
$E(\rho_h,\gamma\rho_h)=E(\rho_k,\gamma\rho_k)$ holds precisely when $h$ and
$k$ are $\Fq$-linearly dependent (cf.\ Prop.~\ref{p:Th}). Consequently we have
shown: the cyclic group $\Lambda$ acts regularly on $L'_U$ and fixes $L_U=L_T$
pointwise.
\par
Consider now the subgroup
\begin{equation*}
    H':=(\varphi_{ij})^{-1}\cdot H\cdot (\varphi_{ij})
\end{equation*}
of $\GL_2(E)$ and the corresponding group $\Lambda':=\varphi^{-1}\Lambda\varphi$ of
projectivities. By the above, the group $\Lambda'$ is cyclic, acts regularly on
$L_T$, and fixes $L'_T$ pointwise. From $t\geq 3$ and Prop.~\ref{p:hans2} the
$F$-chain $\PG(1,F)$ is invariant under the action of $\Lambda'$. Since $\Lambda'$ has
order $\theta_{t-1}$, which is the size of the orbit $L_T$, the group $\Lambda'$
acts faithfully on $\PG(1,F)$.
\par
\emph{Step 3.} Let us choose any element $h\in\Fqt^*$. The projectivity $\pi$
given by the matrix
\begin{equation*}
    (\pi_{ij}):= (\varphi_{ij})^{-1}\cdot \diag(\rho_h,\rho_h)\cdot(\varphi_{ij})\in\GL_2(E)
\end{equation*}
fixes $\PG(1,F)$, as a set. We therefore can repeat the second part of the
proof of Prop.~\ref{p:hans1} up to \eqref{e:mu_final}. By this
formula\footnote{Take notice that the elements $c_{ij}$ that are used now play
the role of the elements $dc_{ij}$ that appear in \eqref{e:mu_final}.}, there
exists an automorphism $\eta\in\Gal(\Fqt/\Fq)$ and an invertible matrix
$(c_{ij})\in\GL_2(q^t)$ such that
\begin{equation}\label{e:mit_eta}
    (\pi_{ij})=
    \diag(\eta,\eta)\cdot(\rho_{c_{ij}}) \in H'.
\end{equation}
\emph{We claim that $\eta=\one$}. In order to verify this assertion, we fix any
element $u\in\Fqt^*$. The point $E(\rho_u,\rho_{u^\beta})$ is in $L_T$, which
is invariant under $\pi\in\Lambda'$ by Step~2. This implies that there is a
$v\in\Fqt^*$ such that $E(\rho_u,\rho_{u^\beta})^\pi =
 E(\rho_v,\rho_{v^\beta})$. Furthermore, $L'_T$ is fixed pointwise under $\pi$.
So, the associated projective collineation $\hat\pi$ of $\PG_q(\Fqt^2)$ sends
for all $k\in\Fqt^*$ the point
\begin{equation*}
     \la(uk,u^\beta k)\ra_q  =  E(\rho_u,\rho_{u^\beta})^\Psi\cap  E(\rho_k,\beta\rho_k)^\Psi
\end{equation*}
to the point
\begin{equation*}
      E(\rho_v,\rho_{v^\beta})^\Psi\cap  E(\rho_k,\beta\rho_k)^\Psi = \la(vk,v^\beta k)\ra_q.
\end{equation*}
Therefore, for each $k\in\Fqt^*$ there is an element $x_k\in\Fq^*$ such that
\begin{equation*}
    ((u k)^\eta,(u^{\beta} k)^\eta)\cdot(c_{ij}) = x_k(vk,v^\beta k).
\end{equation*}
As $\Fqt^2$ is also a vector space over $\Fqt$, this can be rewritten in the
form
\begin{equation*}
    k^\eta (u ^\eta,u^{\beta\eta})\cdot(c_{ij}) = x_k k (v,v^\beta ).
\end{equation*}
Letting $k:=1$ gives $(u ^\eta,u^{\beta\eta})\cdot(c_{ij}) = x_1 (v,v^\beta)$
so that $ k^\eta x_1 (v,v^\beta) = x_k k (v,v^\beta )$. We thus have arrived at
\begin{equation*}
    k^\eta = (x_k / x_1) k \mbox{~for all~} k\in\Fqt^*.
\end{equation*}
This means that each $k\in\Fqt^*$ is an eigenvector of the $\Fq$-linear mapping
$\eta\in E$. Thus $\eta$ has a unique eigenvalue, i.e., $x_k/x_1$ is a constant
that does not depend on $k$. Finally, $1^\eta=1$ shows that this constant is
equal to $1$.

\emph{Step 4.} We maintain all notions from the previous step (with
$\eta=\one)$, under the additional assumption that $h$ is a generator of the
multiplicative group $\Fqt^*$. There are three possibilities for the matrix
$(c_{ij})\in\GL_2(q^t)$:

%%%%%%%% 1st case
\par
First, assume that $(c_{ij})$ has a single eigenvalue in $\Fqt$. Then, since
$(c_{ij})$ cannot be a scalar multiple of the identity matrix, it is similar to
a matrix of the form
      \begin{equation*}
        a\begin{pmatrix}
        1 & b\\0 & 1
        \end{pmatrix}
        \mbox{~ with~} a,b\in\Fqt^*.
      \end{equation*}
Due to $\Char \Fqt = p$, the $p$-th power of this matrix is $\diag(a^p,a^p)$,
which gives that $\Lambda'$ acts on $\PG(1,F)$ as a cyclic permutation group of
order $p$. Since $L_T$ is an orbit under this action, we obtain the
contradiction $\#L _T=\theta_{t-1} \leq p$.

%%%%% 2nd case
\par
Next, assume that $(c_{ij})$ has no eigenvalue in $\Fqt$, whence it has two
distinct eigenvalues $z\neq \overline z$ in $\Fqtt\supset\Fqt$. Here
$\overline{\phantom{I}}$ denotes the unique non-trivial automorphism in
$\Gal(\Fqtt/\Fqt)$. So, over $\Fqtt$, the matrix $(c_{ij})$ is similar to
$\diag(z,\overline z)$. Let $w$ be a generator of the multiplicative group
$\Fqtt^*$. We consider the matrix group $\{\diag(w^s,\overline w^s)\mid
s=1,\ldots,q^{2t}-1\}$ of order $q^{2t}-1$. It acts as a group $\Omega$ of
projectivities on $\PG(1,q^{2t})$. The kernel of this action comprises all
matrices $\diag(w^s,\overline w^s)$ with $w^s=\overline w^s = \overline{w^s}$
or, equivalently, with $w^s\in\Fqt^*$. Hence we have
\begin{equation*}
  \#\Omega = \frac{q^{2t}-1}{q^t-1} = q^{t}+1.
\end{equation*}
The powers of $\diag(z,\overline z)$ constitute a matrix group and give rise to
a subgroup of $\Omega$. This subgroup is isomorphic to $\Lambda'$, and
therefore its order is $\theta_{t-1}$. This gives that $\theta_{t-1}$ divides
$q^{t}+1$, which is impossible due to
\begin{equation*}
    (q-1)\,\theta_{t-1} = q^{t}-1<q^{t}+1 < q^t+q^{t-1}+\cdots+q = q\, \theta_{t-1} .
\end{equation*}

%%%%%%%%%%%%%% 3rd case
\par
By the above, $(c_{ij})$ has two distinct eigenvalues $a,b$ in $\Fqt^*$ and so
there is a matrix $(m_{ij})\in\GL_2(q^t)$ such that
\begin{equation*}
     \diag(a,b) =  (m_{ij})^{-1} \cdot (c_{ij}) \cdot (m_{ij}) .
\end{equation*}
Let $(\mu_{ij})=(\rho_{m_{ij}})\in\GL_2(E)$ and let $\mu\in\PGL_2(E)$ be the
projectivity given by $(\mu_{ij})$. It fixes the $F$-chain $\PG(1,F)$ as a
set. We therefore obtain that $T^\mu$ is a scattered point and the equality
$L_T^\mu = L_{T^\mu}$ (cf.\ Prop.~\ref{p:varie_1}). The group
\begin{equation*}
    H'':=(\mu_{ij})^{-1} \cdot H' \cdot (\mu_{ij})
    = \{ \diag(\rho_a,\rho_b)^s\mid s=1,2,\ldots,q^t-1 \}
\end{equation*}
induces the cyclic group $\Lambda'':=\mu^{-1}\Lambda'\mu$ of projectivities, which
acts regularly on $L_T^\mu$ and fixes the points $E(\one,0)$ and $E(0,\one)$.
So $L_T^\mu$ contains none of these points, whence we have
\begin{equation*}
 T^\mu=  E(\one,\delta) \mbox{~with~}\delta\in  E^*.
\end{equation*}
The matrix group generated by $\diag(\one,\rho_{b/a})$ also induces the group
$\Lambda''$. So the order of $b/a$ in the multiplicative group $\Fqt^*$ is
$\theta_{t-1}$. There is an element $e\in\Fqt^*$ such that
\begin{equation*}
    b/a=e^{q-1} .
\end{equation*}
This allows us to give another group of matrices that induces $\Lambda''$, namely
the group generated by
\begin{equation}\label{e:galois}
    \rho_e\diag(\one,\rho_{b/a}) = \diag(\rho_e,\rho_{e^\sigma}),
\end{equation}
where $\sigma\in\Gal(\Fqt/\Fq)$ is given by $x\mapsto x^{q}$.

Finally, let $d:=1^\delta$. Then $E(\one,\rho_d)\in L_T^\mu$ and, by writing
the action of $\Lambda''$ in terms of the powers of the matrix from
\eqref{e:galois}, we obtain
\begin{equation*}
    L_T^\mu =
    \{  E(\one,\rho_d)\diag( \rho_u,\rho_{u^\sigma})
    =E(\rho_u,\rho_{u^\sigma d})
    \mid u=e^s,\;s=1,2,\ldots,\theta_{t-1} \} .
\end{equation*}
Thus $L_T^\mu$ is contained in $L_W$, where
\begin{equation*}
   W :=  E(\one,\sigma\rho_d).
\end{equation*}
This implies, due to $\#L_T^\mu=\#L_W$, that $L_T$ is projectively equivalent
to $L_W$, which is of pseudoregulus type.
\end{proof}

It should be noted that in Step~4 in the proof above the choice of a generator
$h$ of the multiplicative group $\Fqt^*$ serves the sole purpose that the
projectivity related to the matrix $(\rho_{c_{ij}})$ is a generator of the
group $\Lambda'$ acting regularly on $L_T$. Bearing this in mind, the following
proposition can be extrapolated:
\begin{proposition}\label{p:gruppo-transitivo}
  Let $L$ be a scattered linear set of rank $t$ in $\PG(1,q^t)$. If there
  exists a cyclic subgroup of $\PGL_2(q^t)$ acting regularly on $L$, then $L$
  is of pseudoregulus type.
\end{proposition}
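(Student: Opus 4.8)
The plan is to transfer the hypothesis into $\PG(1,E)$ and then re-run only the eigenvalue analysis of Step~4 in the proof of Thm.~\ref{t:pseudoreg}, which is the single ingredient genuinely needed here. First I would embed $\PG(1,q^t)$ into $\PG(1,E)$ via $\iota$ and write $L=\cB(T)$ with $T=E(\one,\beta)$ a scattered point, so that $L^\iota=L_T$. The given cyclic subgroup $\Gamma\le\PGL_2(q^t)$ acts on $\PG(1,q^t)\cong\PG(1,F)$; each of its elements is induced by a matrix $(c_{ij})\in\GL_2(q^t)$, and the corresponding matrix $(\rho_{c_{ij}})\in\GL_2(E)$ induces a projectivity of $\PG(1,E)$ extending it and fixing $\PG(1,F)$ as a set. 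Thus $\Gamma$ may be regarded as a cyclic group $\Lambda'$ of projectivities of $\PG(1,E)$ acting regularly on $L_T$ and faithfully on $\PG(1,F)$, with $\#\Lambda'=\#L_T=\theta_{t-1}$. This is exactly the situation reached at the end of Step~2 in the proof of Thm.~\ref{t:pseudoreg}, but now handed to us directly rather than constructed from a projectivity $\varphi$.

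Next I would fix a generator $\gamma$ of $\Gamma$, described by a matrix $(c_{ij})\in\GL_2(q^t)$ whose induced projectivity has order $\theta_{t-1}$. Because $\Gamma$ already lies in $\PGL_2(q^t)$, the accompanying automorphism of each of its elements is the identity, so the whole of Step~3 (whose sole purpose was to establish $\eta=\one$) becomes vacuous. The core of the argument is then the trichotomy on the eigenvalues of $(c_{ij})$, carried out verbatim as in Step~4: if $(c_{ij})$ has a single eigenvalue and is non-scalar, its $p$-th power is scalar, forcing $\theta_{t-1}\le p$, which is absurd since $\theta_{t-1}\ge q+1>p$; and if it has two conjugate eigenvalues $z,\overline z$ in $\Fqtt\setminus\Fqt$, then the induced projectivity lies in a cyclic group of order $q^t+1$, forcing $\theta_{t-1}\mid q^t+1$, impossible because $(q-1)\,\theta_{t-1}=q^t-1<q^t+1<q\,\theta_{t-1}$.

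Hence $(c_{ij})$ must have two distinct eigenvalues $a,b\in\Fqt^*$, the surviving case that yields the conclusion. I would diagonalise $(c_{ij})$ by some $(m_{ij})\in\GL_2(q^t)$, pass to the projectivity $\mu$ of $\PG(1,E)$ given by $(\rho_{m_{ij}})$, and replace $T$ by the scattered point $T^\mu$ using $L_T^\mu=L_{T^\mu}$ (Prop.~\ref{p:varie_1}). The conjugated group $\Lambda''=\mu^{-1}\Lambda'\mu$ is induced by the diagonal matrices $\diag(\rho_a,\rho_b)^s$, acts regularly on $L_T^\mu$, and fixes $E(\one,0)$ and $E(0,\one)$; since these two fixed points cannot lie in the orbit $L_T^\mu$, we obtain $T^\mu=E(\one,\delta)$. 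The order of $b/a$ in $\Fqt^*$ is $\theta_{t-1}$, so $b/a=e^{q-1}$ for some $e\in\Fqt^*$, and rewriting the generator as $\diag(\rho_e,\rho_{e^\sigma})$ with $\sigma:x\mapsto x^q$ exhibits $L_T^\mu$ as an orbit contained in $L_W$ for $W:=E(\one,\sigma\rho_d)$, $d:=1^\delta$. Comparing cardinalities gives $L_T^\mu=L_W$, so $L$ is of pseudoregulus type.

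I expect no genuine obstacle beyond bookkeeping, precisely because the remark preceding the statement isolates what must change: the only role of the generator $h$ in Step~4 was to make $(\rho_{c_{ij}})$ generate a group acting regularly on $L_T$, whereas here the regular cyclic action is the hypothesis itself. The one point deserving care is that $\Gamma$ need not stabilise $\PG(1,F)$ by appeal to Prop.~\ref{p:hans2} (so the restriction $t\ge3$ is not forced on us): since $\Gamma\le\PGL_2(q^t)$, the matrices $(\rho_{c_{ij}})$ over $F$ fix $\PG(1,F)$ as a set automatically, which both supplies the $\PG(1,F)$-invariance and makes Step~3 redundant.
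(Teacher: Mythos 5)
Your proposal is correct and takes essentially the same approach as the paper, which proves this proposition precisely by the extrapolation you carry out: re-running the eigenvalue trichotomy of Step~4 of Theorem~\ref{t:pseudoreg} for a matrix $(c_{ij})\in\GL_2(q^t)$ generating the given cyclic group acting regularly on $L$. Your two supporting observations --- that Step~3 is vacuous because the accompanying automorphism of any element of a subgroup of $\PGL_2(q^t)$ is trivial, and that the invariance of $\PG(1,F)$ under the lifted group $(\rho_{c_{ij}})$ is automatic rather than obtained via Prop.~\ref{p:hans2} --- are exactly the points that make the extrapolation legitimate without the hypothesis $t\ge 3$.
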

On the other hand, any linear set of pseudoregulus type is projectively
equivalent to $\{\la(1,u^{q-1})\ra_{q^t}\mid u\in\Fqt^*\}$ \cite{DoDu2014},
\cite{LuMaPoTr2014}. Hence the converse of Prop.~\ref{p:gruppo-transitivo} can
be proved directly.

\noindent
Authors' addresses:\\[1mm]
\noindent
Hans Havlicek\\
Institut f\"{u}r Diskrete Mathematik und Geometrie,\\
Technische Universit\"{a}t Wien,\\
Wiedner Hauptstra{\ss}e 8--10,\\
A-1040 Wien,\\
Austria\\[1mm]
\noindent
Corrado Zanella\\
Dipartimento di Tecnica e Gestione dei Sistemi Industriali,\\
Universit\`a di Padova,\\
Stradella S.\ Nicola, 3,\\
I-36100 Vicenza,\\
Italy

\end{document}